\numberwithin{figure}{section}
\numberwithin{table}{section}
\DeclareMathOperator{\sech}{sech}
\DeclareMathOperator{\csch}{csch}
\theoremstyle{plain}
\newcommand{\norm}[1]{\left\Vert#1\right\Vert}
\newcommand{\R}{\mathbb R}
\newcommand{\Z}{\mathbb{Z}}
\newcommand{\N}{\mathbb N}
\newtheorem{theorem}{Theorem}[section]
\newtheorem{lemma}[theorem]{Lemma}
\newtheorem{definition}[theorem]{Definition}
\newtheorem{remark}[theorem]{Remark}
\numberwithin{equation}{section}     
\numberwithin{figure}{section}
\numberwithin{table}{section}
\newcounter{asnr}
\ifnum\value{asnr}=0 \stepcounter{asnr} 
\newcounter{defnr}
\ifnum\value{defnr}=0 \stepcounter{defnr} 
\numberwithin{equation}{section} \allowdisplaybreaks
\title[Stability and convergence for KdV]
{Convergence of a conservative Crank-Nicolson finite difference scheme for the KdV equation with smooth and non-smooth initial data}
\date{}
\author[M. Dwivedi]{Mukul Dwivedi}
\address[Mukul Dwivedi]{\newline
Department of Mathematics, 
	Indian Institute of Technology Jammu,
	Jagti, NH-44 Bypass Road, Post Office Nagrota,
	Jammu - 181221, India}
\email[]{mukul.dwivedi@iitjammu.ac.in}
\author[T. Sarkar]{Tanmay Sarkar}
\address[Tanmay Sarkar]{\newline
	Department of Mathematics, 
	Indian Institute of Technology Jammu,
	Jagti, NH-44 Bypass Road, Post Office Nagrota,
	Jammu - 181221, India}
\email[]{tanmay.sarkar@iitjammu.ac.in}
\subjclass[2021]{Primary: 65M06, 35Q53; Secondary: 65M12.}
\keywords{Korteweg-de Vries equation, Crank-Nicolson scheme, error analysis and numerical convergence}
\thanks{}
\begin{document}
\begin{abstract}
In this paper, we study the stability and convergence of a fully discrete finite difference scheme for the initial value problem associated with the Korteweg-De Vries (KdV) equation. We employ the Crank-Nicolson method for temporal discretization and establish that the scheme is \(L^2\)-conservative. The convergence analysis reveals that utilizing inherent \emph{Kato's local smoothing effect}, the proposed scheme converges to a classical solution for sufficiently regular initial data $u_0 \in H^{3}(\mathbb{R})$ and to a weak solution in \(L^2(0,T;L^2_{\text{loc}}(\mathbb{R}))\) for non-smooth initial data $u_0 \in L^2(\mathbb{R})$. Optimal convergence rates in both time and space for the devised scheme are derived. The theoretical results are justified through several numerical illustrations.
\end{abstract}

\maketitle

\section{Introduction}
The Korteweg–de Vries (KdV) equation, a cornerstone in the study of nonlinear dispersive long waves, finds applications in diverse fields such as inverse scattering methods and plasma physics \cite{Bona1975KdV,holden1999operator,korteweg1895xli,sjoberg1970korteweg}. It describes the evolution of weakly nonlinear and weakly dispersive waves in one spatial dimension. 
The historical significance of KdV equation stems from its emergence in analysing surface water waves and its pivotal role in soliton theory. Notably, the KdV equation supports soliton solutions—persistent, stable solitary waves that arise from a delicate balance between nonlinearity and dispersion \cite{linares2014introduction}. These solitons play a crucial role in understanding wave interactions and propagation phenomena. Motivated by this,
we consider the following Cauchy problem associated with the KdV equation
\begin{equation}\label{kdveqn}
\begin{cases}
    u_t + uu_x + u_{xxx} = 0, & \qquad (x,t) \in Q_T :=\mathbb{R}\times (0,T), \\
    u(0,x) = u_0(x), & \qquad x \in \mathbb{R},
\end{cases}
\end{equation}
where $u : Q_T \xrightarrow[]{}\mathbb{R} $ is the unknown function, $T>0$ is fixed, and $u_0$ is the prescribed initial data.

The well-posedness of the initial value problem \eqref{kdveqn} has been the subject of extensive investigation in the literature. Bona and Smith in \cite{Bona1975KdV} made pioneering contributions by establishing the first results on local and global well-posedness for the KdV equation. Specifically, they demonstrated local well-posedness for initial data in $H^s$ with $s > 3/2$ and global well-posedness for $s \geq 2$. Building on this foundation, subsequent research by Kenig et al. \cite{kenig1993cauchy} and Killip et al. \cite{killip2019kdv} extended the result of global well-posedness to encompass initial data in negative-order Sobolev spaces.

The numerical computation of solutions for \eqref{kdveqn} presents inherent challenges. It is well-known that the equation \eqref{kdveqn} exhibits two competing effects that contribute to the difficulties encountered in the numerical approximation process.
The inclusion of the nonlinear convection term \(uu_x\) in equations like the Burgers equation \(u_t + uu_x = 0\) results in the emergence of infinite gradients in finite time even for initially smooth data \cite{klein2015numerical, tao2004global}. Additionally, the presence of the linear dispersive term \( u_{xxx}\) inherent in the  KdV equation introduces dispersive waves that are arduous to compute with high accuracy and efficiency. Consequently, due to the combined effects of the nonlinear convection term and the dispersive term, accurate and efficient numerical methods for the KdV equation remain a highly intricate task.

Sjöberg \cite{sjoberg1970korteweg} initiated the convergence analysis of the KdV equation through a semidiscrete scheme for the initial data in $H^3(\R)$. This also yields global well-posedness of \eqref{kdveqn} due to its conservation in the $L^2$ norm. Afterwards, the numerical treatment with convergence analysis of the scheme for the KdV equation has also garnered significant attention in the last few decades, leading to the development of various computational methods. For instance, Amorim and Figueira \cite{Amorim2013mKdV} introduced a semi-discrete finite difference method designed for $L^2$ initial data by introducing a fourth order stabilization term. However, the study lacked a fully discrete convergence analysis, and conclusive evidence from the numerical illustrations. Holden et al. \cite{holden2015convergence} introduced a fully discrete finite difference scheme for the KdV equation \eqref{kdveqn} applicable to both $H^3$ and $L^2$ initial data, which is shown to be first order accurate numerically. Recently, Court{\`e}s et al. \cite{courtes2020error} designed a convergent finite difference scheme considering a $4$-point $\theta$ scheme for the dispersive term and demonstrated its first order accuracy. For other finite difference related work involving \eqref{kdveqn}, one can refer to \cite{goda1975stability, 2016Ujjwalkawahara, skogestad2009boundary, feng1998finite, li2006high, abrahamsen2021solving, wang2021high} and references therein. Apart from the finite difference approaches, there are developments in the direction of Galerkin schemes for \eqref{kdveqn}.
Dutta et al. \cite{dutta2015convergence} proposed a higher-order finite element method tailored for $L^2$ initial data. Holden et al. \cite{holden1999operator, holden2011operator} devised an operator splitting method which is also first order accurate. This technique is further generalized in \cite{dutta2021operator}. Additionally, Dutta et al. \cite{dutta2016note} presented a Crank-Nicolson Galerkin scheme specifically designed for $L^2$ initial data.

In this paper, our aim is to develop a fully discrete implicit finite difference scheme of \eqref{kdveqn} which is conservative and provides higher convergence rates. In this regard, we design an efficient finite difference scheme by discretizing the time derivative using the Crank-Nicolson method. We demonstrate the following behaviour of the scheme:
 \begin{enumerate}
     \item The proposed scheme is $L^2$ conservative. However, the implicit nature of the scheme requires proving its solvability at each time step. This will be accomplished by defining a suitable iterative scheme.
     \item We prove that the difference approximations obtained by the proposed scheme converge to a classical solution of the KdV equation \eqref{kdveqn} provided the initial data is sufficiently regular. The idea of the proof differs significantly from \cite{holden2015convergence}. 
     Whenever the initial data \(u_0 \in L^2(\mathbb{R})\), motivated by the work \cite{dutta2015convergence, dwivedi2023stability, dutta2016note}, we utilize the inherent Kato's type \cite{kato1983cauchy} local smoothing effect. We establish the discrete analogue of this effect for the approximate solution obtained through our devised finite difference scheme. This ensures the efficacy of our approach in handling non-smooth initial data, providing a comprehensive framework for the stability and convergence analysis. More precisely, ensuring the compactness of the difference approximations through the Aubin-Simon compactness lemma, we show the convergence of weak solution in the space $L^2(0,T;L^2_{\text{loc}}(\mathbb{R}))$.
     \item We investigate the theoretical convergence rates under certain assumptions on the initial data. We prove that the proposed scheme is second order accurate. Furthermore, these convergence rates are justified through the numerical experiments of one soliton and two soliton cases. Since the real solutions of IVP \eqref{kdveqn} possess mainly three conserved quantities:
    \begin{align*}
        C_1(u): & = \int_{\R} u(x,t)~dx, \quad C_2(u):= \int_{\R} u^2(x,t)~dx,\\
         C_3(u):&= \int_{\R} \left((\partial_x u)^2 - \frac{u^3}{3}\right)(x,t)~dx.
    \end{align*}
We shall demonstrate that the proposed numerical scheme conserves a discrete version of these quantities.
 \end{enumerate}

In this paper, C denotes a generic constant whose value can change in each step and it is independent of both the spatial step $\Delta x$ and time step $\Delta t$.

The rest of the paper is organized as follows: In Section \ref{sec2}, we lay the foundation by establishing preliminary estimates and introducing discrete operators. In addition, we present the proposed fully discrete finite difference scheme for the Korteweg-de Vries equation \eqref{kdveqn}.
Moving on to Section \ref{sec3}, we analyze the convergence of the approximate solutions for both regular and less regular initial data. Theoretical insights into the convergence rates are explored in Section \ref{sec4}. 
Finally, in Section \ref{sec5}, we validate our theoretical findings through a series of numerical illustrations. This comprehensive organization ensures a systematic and coherent presentation of our methodology, analysis, and numerical results.

\section{Notations and Preliminary Estimates}\label{sec2}
\subsection{Discrete operators}
We use uniform discretization of space and time using the nodal points $x_j = j\Delta x,~j\in \Z$ and $t_n = n\Delta t,~n\in\N$, where $\Delta x$ and $\Delta t$ are spatial and temporal steps respectively. The difference operators for a function $v:\mathbb{R}\to\mathbb{R}$ are defined as
\begin{equation}\label{frdiff}
     D_{\pm}v(x) = \pm\frac{1}{\Delta x}\big(v(x\pm\Delta x) - v(x)\big), \qquad D = \frac{1}{2}(D_+ +D_-).
\end{equation}
We also introduce the shift operators $$S^{\pm}v(x) = v(x\pm\Delta x),$$ and the averages $\tilde{v}(x)$ and $\bar{v}(x)$ are defined by
\[\tilde{v}(x) := \frac{1}{3}\left(S^+v(x) + v(x) + S^-v(x) \right), \qquad \bar{v}(x) := \frac{1}{2}(S^+ + S^-)v(x).\]
The difference operators satisfy the following identities
\begin{equation*}
    \begin{aligned}
        D(vw) &= \bar{v}Dw + \bar{w}Dv, \\
        D_{\pm}(vw) & = S^{\pm}vD_{\pm}w + wD_{\pm}v = S^{\pm}w D_{\pm}v+vD_{\pm}w.
    \end{aligned}
\end{equation*}
For any given function $v$, we define $v_j = v(x_j)$. Moreover for $v,w \in \ell^2$, we define the usual inner product and norm as
\begin{equation}\label{normdef}
    \langle v,w\rangle = \Delta x\sum_{j\in\mathbb{Z}}v_jw_j, \qquad \norm{v}=\norm{v}_2=\langle v,v \rangle^{1/2}.
\end{equation}
It is observed that the difference operators satisfy shifting properties within the inner product
\[ \langle v , Dw\rangle = - \langle Dv , w\rangle, \qquad \langle v , D_{\pm}w\rangle  = -\langle D_{\mp}v , w\rangle. \]
We define the discrete Sobolev $h^3$-norm of a grid function $v$ as
\begin{equation}\label{Disc_sobo}
    \norm{v}_{h^3} := \norm{v} + \norm{D_+v} + \norm{D_+D_-v} + \norm{D_-DD_+v}.
\end{equation}
As a consequence, we have 
$$\norm{v}_\infty \leq \frac{1}{\Delta x^{1/2}}\norm{v},\qquad \norm{Dv} \leq \frac{1}{\Delta x}\norm{v}. $$
Using the properties of the difference operator, we deduce the following identities
\begin{align}
    \label{Dz_1z_2}\langle D(vw) ,w\rangle &= \frac{\Delta x}{2}\langle D_+vDw ,w \rangle + \frac{1}{2}\langle S^-wDv ,w\rangle,\\
    \label{D3z_1z_2}D_-DD_+ (vw) &= D_-vD_+w + S^-vD_-DD_+w+ D_+vD_+w + D_-DD_+vDw.
\end{align}
The difference operator in time for $v:[0,T]\to\mathbb{R}$ is given by 
\[ \Delta t D_{\pm}v(t)= \pm\left(v(t+\Delta t) - v(t)\right), \quad t\in[0,T-\Delta t].\]
A fully discrete grid function $v_{\Delta x} :\Delta x\Z \times\Delta t\mathbb{N}_0\to\mathbb{R}$ is defined as 
\begin{equation*}
    v_{\Delta x}(x_j, t_n) = v_j^n, \quad j \in \mathbb{Z}, \quad n \in \mathbb{N}_0.
\end{equation*}
We omitted $\Delta t$ in the definition of grid function due to the CFL-condition. Furthermore, we denote  $v^n := \{v^n_j\}_{j\in\mathbb{Z}}$.
\subsection{Preliminary estimates}
We begin with a pivotal lemma establishing a relationship between continuous and discrete Sobolev norms. More precisely, we have the following lemma:
%%%%%%%%%%%%%%%%%%%%%%%%%%%%%%%%%%%
\begin{lemma}
    Let $u\in H^3(\mathbb{R})$.  Assume that $u_{\Delta x} = \{u(x_j)\}_{j\in\mathbb{Z}}$. Then there exists a constant $C$ such that
    \begin{equation}\label{esti:h3_main}
        \norm{u_{\Delta x}}_{h^3} \leq C\norm{u}_{H^3},
    \end{equation}
    where the norm $\|\cdot\|_{h^3}$ is defined by \eqref{Disc_sobo}.
\end{lemma}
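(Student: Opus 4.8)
The plan is to estimate the four terms comprising $\norm{u_{\Delta x}}_{h^3}$ separately against $\norm{u}_{H^3}$ and then combine them by the triangle inequality. Throughout write $h = \Delta x$ and note that $u \in H^3(\R)$ embeds into $C^2(\R)$, so the point values $u(x_j)$, $u'(x_j)$, $u''(x_j)$ are well defined and $u''$ is absolutely continuous with $u''' \in L^2(\R)$; all Taylor expansions below are taken with integral remainders involving $u'''$. The unifying idea is that each discrete quantity $D_+u(x_j)$, $D_+D_-u(x_j)$, $D_-DD_+u(x_j)$ is a consistent finite-difference stencil whose coefficients annihilate polynomials up to the appropriate degree; consequently each can be written as a weighted integral $\int K_h(s)\,\partial_x^m u(x_j+s)\,ds$ of the corresponding continuous derivative against a kernel $K_h$ supported in $[-Mh,Mh]$ with $\int \abs{K_h}\,ds \le C$ uniformly in $h$. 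Once this representation is in hand, a Cauchy--Schwarz inequality in $s$ followed by Fubini and a bounded-overlap tiling argument turns the weighted $\ell^2$ sum into the full $L^2$ norm of $\partial_x^m u$.

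For the zeroth-order term I would not use a stencil but instead the identity $u(x_j)^2 \le u(x)^2 + 2\int_{I_j}\abs{u}\abs{u'}\,dy$ for $x \in I_j := [x_j,x_{j+1}]$, which follows from the fundamental theorem of calculus applied to $u^2$. Averaging over $x \in I_j$, multiplying by $h$, and summing over $j$ yields $\norm{u_{\Delta x}}^2 = h\sum_j u(x_j)^2 \le \norm{u}_{L^2}^2 + 2h\,\norm{u}_{L^2}\norm{u'}_{L^2} \le C\norm{u}_{H^1}^2$, where the last step uses $h \le 1$. For the first-order term I would write $D_+u(x_j) = \tfrac1h\int_0^h u'(x_j+s)\,ds$, apply Cauchy--Schwarz to get $h\abs{D_+u(x_j)}^2 \le \int_{I_j}\abs{u'}^2\,dy$, and sum over the disjoint cells $I_j$ to obtain $\norm{D_+u_{\Delta x}} \le \norm{u'}_{L^2}$.

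The second-order term uses the symmetric Taylor identity $D_+D_-u(x_j) = \int_0^h \tfrac{h-s}{h^2}\big(u''(x_j+s)+u''(x_j-s)\big)\,ds$, in which the total weight equals one; Jensen's inequality, the bound $\tfrac{h-s}{h}\le 1$, and the fact that $\int_0^h \abs{u''(x_j\pm s)}^2\,ds$ integrates $\abs{u''}^2$ over a single cell give, after summing over $j$, the clean estimate $\norm{D_+D_-u_{\Delta x}}^2 \le 2\norm{u''}_{L^2}^2$. The genuinely delicate step is the third-order term, which I expect to be the main obstacle: one first records the antisymmetric five-point form $D_-DD_+u(x_j) = \tfrac{1}{2h^3}\big(u_{j+2}-2u_{j+1}+2u_{j-1}-u_{j-2}\big)$, checks that its coefficients annihilate $1$, $k$, and $k^2$ so that the zeroth, first, and second order Taylor contributions cancel, and is thereby left with a combination of third-order Taylor remainders. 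This exhibits $D_-DD_+u(x_j) = \int_{-2h}^{2h}K_h(s)\,u'''(x_j+s)\,ds$ with $\abs{K_h(s)}\le C/h$ on its support, hence $\int_{-2h}^{2h}\abs{K_h}\,ds \le C$ uniformly in $h$. A Cauchy--Schwarz in $s$, Fubini, and the observation that each point of $\R$ lies in at most a fixed finite number of the widened intervals $[x_{j-2},x_{j+2}]$ (bounded overlap) then yield $\norm{D_-DD_+u_{\Delta x}}^2 \le C\norm{u'''}_{L^2}^2$. The care needed in computing the remainder kernel, verifying its $h$-uniform $L^1$ bound, and tracking the finite overlap is what makes this term the crux, the remaining terms being comparatively routine. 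Summing the four estimates and using $\norm{u'}_{L^2}, \norm{u''}_{L^2}, \norm{u'''}_{L^2} \le \norm{u}_{H^3}$ completes the proof.
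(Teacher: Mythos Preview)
Your argument is correct. Each of the four terms is handled soundly: the trace-type estimate for the $\ell^2$ norm of samples, the direct integral representation for $D_+$, the Peano-kernel representation for $D_+D_-$, and the moment-cancellation/kernel bound for the five-point stencil $D_-DD_+$ all go through as you describe (the only cosmetic caveat is that your zeroth-order bound carries a factor $1+2h$, so a harmless $h\le 1$ assumption or a trivial rewriting is needed there).

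The paper, however, takes a more streamlined and genuinely different route. Rather than computing a third-order Taylor kernel for the full stencil, it ``peels'' the difference operators one at a time using the single identity
\[
D_+v(x_j)=\frac{1}{\Delta x}\int_{x_j}^{x_{j+1}}\partial_x v(x)\,dx,
\]
together with the Cauchy--Schwarz inequality on each cell, to obtain $\norm{D_+v}\le\norm{\partial_x v}_{L^2}$ for any $v$. Since $\partial_x$ commutes with the remaining difference operators, one iterates: $\norm{D_-DD_+u}\le\norm{D_-D\,\partial_x u}\le\norm{D\,\partial_x^2 u}\le\norm{\partial_x^3 u}_{L^2}$, and similarly for the lower-order terms. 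This avoids entirely the explicit kernel computation, the verification that the stencil annihilates quadratics, and the bounded-overlap counting that you identify as the crux of your proof; each step is identical in form, and the constant obtained is~$1$. Your approach, by contrast, is a direct Peano-kernel attack: it is more hands-on and generalizes readily to non-nested or non-iterated stencils (where the peeling trick is unavailable), at the cost of the bookkeeping you describe.
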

\begin{proof}
   We observe that
    \begin{align*}
        \norm{D_-DD_+ u}^2  &= \Delta x \sum_j \left( \frac{1}{\Delta x}\left(D_-Du(x_{j+1}) - D_-Du(x_j)\right)\right)^2\\
        &= \Delta x \sum_j \left( \int_{x_j}^{x_{j+1}}\frac{1}{\Delta x}\partial_xD_-Du(x)\,dx\right)^2\\
        &\leq \Delta x \sum_j \left( \norm{\frac{1}{\Delta x}}_{L^2([x_j,x_{j+1}])}\norm{\partial_xD_-Du(x)}_{L^2([x_j,x_{j+1}])}\right)^2,
    \end{align*}    
where we have used the H{\"o}lder's inequality. Using the fact that the difference operator commutes with the continuous operator $\partial_x$, we get
    \begin{align*}
       \norm{D_-DD_+ u}^2 \leq  \norm{D_-D\partial_x u}^2.
    \end{align*}
Similar calculations yield
    \[\norm{D_-D\partial_x u}^2\leq\norm{D\partial^2_x u}^2\leq \norm{\partial^3_x u}^2\leq \norm{u}_{H^3}^2.\]
Following the similar arguments, we have 
\begin{align*}
    \|D_{+}u\| \leq \|\partial_x u\|_{L^2}, \quad \|D_{+}D_{-}u\|\leq \|\partial_x^2 u\|_{L^2}.
\end{align*}
Hence the estimate \eqref{esti:h3_main} is obtained.
\end{proof}
\subsection{Numerical scheme}
We propose the following Crank-Nicolson(CN) fully discrete finite difference scheme to obtain approximate solutions of the KdV equation \eqref{kdveqn}:
\begin{equation}\label{CNFDscheme}
    u_j^{n+1} = u_j^n -\Delta t \Tilde{u}_j^{n+1/2} Du_j^{n+1/2} - \Delta tD_-DD_+u^{n+1/2}_j,\qquad n\in\mathbb{N}_0,~j\in\mathbb{Z}.
\end{equation}
For the initial data, we have 
\begin{equation*}
    u_j^0 = u_0(x_j), \qquad j\in \mathbb{Z}.
\end{equation*}

The scheme immediately reveals its $L^2$-conservative nature. For the time being, let us assume that the scheme has a unique solution, although we will prove this in a subsequent section. Delving into the details, we perform an inner product of \eqref{CNFDscheme} with $u^{n+1/2}$ to obtain
\begin{equation}\label{temp:conserve_1}
    \|u^{n+1}\|^2 = \|u^n\|^2 - \Delta t \langle \mathbb{G}(u^{n+1/2}) , u^{n+1/2}\rangle - \Delta t \langle D_-DD_+(u^{n+1/2}), u^{n+1/2}\rangle,
\end{equation}
where $\mathbb{G}(u) = \Tilde{u}Du$. We observe that
\begin{align*}
    \langle \mathbb{G}(u) , u \rangle = 0, \qquad \langle D_-DD_+(u), u \rangle = 0.
\end{align*}
Subsequently, from \eqref{temp:conserve_1} we end up with $\|u^{n+1}\|= \|u^n\|$ for all $n\in\N$.
Finally, we interpolate the approximation $u^n$ through two steps.\\
\emph{Interpolation in space and time:}
we employ the piece-wise quadratic continuous interpolation to interpolate in space for $j \in \mathbb{Z}$,
\begin{align*}
    u^n(x) = & u^n_j + (x - x_j)D_+u^n_j + \frac{1}{2}(x - x_j)^2D_+D_-u^n_j
    + \frac{1}{6}(x - x_j)^3D_+DD_-u^n_j, \quad x \in [x_j, x_{j+1}).
\end{align*}
Afterwards, we perform the interpolation in time 
\begin{equation}\label{interCN}
    u_{\Delta x}(x, t) = u^n(x) + (t - t_n)D_+^tu^n(x), \quad x \in \mathbb{R}, \quad t \in [t_n, t_{n+1}), \quad (n+1)t_{n+1} \leq T.
\end{equation}
The regularity of the initial data $u_0$ plays a crucial role in the study of convergence of the finite difference scheme \eqref{CNFDscheme}. We focus on the convergence of the scheme \eqref{CNFDscheme} in the subsequent sections. 
%%%%%%%%%%%%%%%%%%%%%%%%%%%%%%%%%%%%%%%%%%%%%%%%%%%%%%%%%%%
\section{Convergence analysis of the scheme}\label{sec3}
Hereby our aim is to prove the convergence of approximate solutions obtained by the CN scheme \eqref{CNFDscheme} to the classical solution of \eqref{kdveqn} if the initial data $u_0\in H^3(\R)$ and to the weak solution in $L^2([0,T);L^2_\text{loc}(\R))$ if the initial data $u_0\in L^2(\R)$. Please note that the scheme is implicit in nonlinear term, hence we have to ensure that there exists a unique solution. To solve \eqref{CNFDscheme}, we will consider the fixed-point iteration technique as in \cite{dutta2016convergence, dwivedi2023stability, thomee1998numerical} and prove the solvability at each time step in the following Lemma \ref{lemma:solv.lemma} and Lemma \ref{lemma:solv.lemmaL2} for the initial data in $H^3$ and $L^2$ respectively.

We commence by introducing the sequence $\{w^\ell\}_{\ell\geq0}$, defined as the solution of the following linear iterative equation:
\begin{equation}\label{iterr}
    \begin{cases}
        w^{\ell+1} = u^n - \Delta t \mathbb{G}\left(\frac{u^n+w^\ell}{2}\right) - \Delta t D_-DD_+\left(\frac{u^n+w^{\ell+1}}{2}\right),\\
        w^0 = u^n.
    \end{cases}
\end{equation}
The linearity of the iteration in $w^{\ell+1}$ allows us to express it in a more structured form:
\begin{equation}\label{iterr2}
    \left(1+\frac{\Delta t}{2}D_-DD_+\right)w^{\ell+1} = u^n - \Delta t \mathbb{G}\left(\frac{u^n+w^\ell}{2}\right) - \frac{\Delta t}{2} D_-DD_+u^n.
\end{equation}
Since the matrix obtained by applying the operator $\frac{\Delta t}{2}D_-DD_+$ on the vector \(w^{\ell+1}\) is skew-symmetric, the resulting coefficient matrix on the left-hand side of \eqref{iterr2} is positive definite. This is evident from the skew-symmetric property of the discrete operator $D_-DD_+$,
\[ D_-DD_+ w^{\ell+1}_j = \frac{1}{2\Delta x^3}\left(w^{\ell+1}_{j+2}-w^{\ell+1}_{j+1}+w^{\ell+1}_{j-1} -w^{\ell+1}_{j-2}\right). \]
We remark that the positive definiteness of the matrix ensures the existence and uniqueness of the iterative scheme \eqref{iterr}.
%%%%%%%%%%%%%%%%%%%%%%%%%%
\subsection{Convergence analysis with $H^3$ initial data}
Consider the case where the initial data $u_0$ is sufficiently regular. In this subsection, we establish the stability of the scheme and demonstrate that the approximate solution obtained by \eqref{CNFDscheme} converges to the classical solution of \eqref{kdveqn}. The iterative scheme \eqref{iterr} is instrumental in handling the non-linearity with an implicit term. We begin by proving a lemma that ensures the solvability of the scheme at each time step whenever the initial data $u_0$ belongs to $H^3(\R)$. The following lemma sets the foundation for the subsequent stability and convergence analysis.
\begin{lemma}\label{lemma:solv.lemma}
    Let $ K = \frac{4-L}{1-L} > 4$ be a constant with $0<L<1$.
    Consider the iteration given by \eqref{iterr} and assume that the CFL condition satisfies
    \begin{equation}\label{CNCFL}
        \lambda \leq \frac{L}{K\|u^n\|_{h^3}},
    \end{equation}
    where $\lambda = \frac{\Delta t}{\Delta x}$. Then there exists a function $u^{n+1}$ that solves \eqref{CNFDscheme} and $\lim_{\ell\xrightarrow{}\infty}w^{\ell} = u^{n+1} $. Moreover, the following estimate holds:
    \begin{equation}\label{stabn}
        \|u^{n+1}\|_{h^3} \leq K \|u^n\|_{h^3}.
    \end{equation}
\end{lemma}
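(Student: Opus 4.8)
The plan is to read \eqref{iterr} as a fixed-point iteration and to combine a \emph{uniform} $h^3$-bound on the iterates with a \emph{contraction} estimate, so that the Banach fixed-point principle yields a limit solving \eqref{CNFDscheme}. First I would record that the iteration is well posed: writing $\mathcal{A}:=1+\tfrac{\Delta t}{2}D_-DD_+$, the skew-symmetry of $D_-DD_+$ makes $\mathcal{A}$ positive definite (as already noted below \eqref{iterr2}), hence invertible, so that each $w^{\ell+1}=\mathcal{A}^{-1}\big(u^n-\Delta t\,\mathbb{G}(\tfrac{u^n+w^\ell}{2})-\tfrac{\Delta t}{2}D_-DD_+u^n\big)$ is uniquely determined from $w^\ell$. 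The entire argument then reduces to propagating \eqref{stabn} along the iterates and showing that they form a Cauchy sequence.

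For the uniform bound I would show, by induction on $\ell$, that $\|w^\ell\|_{h^3}\le K\|u^n\|_{h^3}$ for every $\ell\ge 0$; the base case is immediate since $w^0=u^n$ and $K>1$. For the inductive step, set $v=\tfrac{u^n+w^\ell}{2}$ and apply each of the operators $\mathrm{id},\,D_+,\,D_+D_-,\,D_-DD_+$ (all of which commute with the constant-coefficient operator $D_-DD_+$) to \eqref{iterr2}; crucially, I would take the inner product of each resulting identity not with $w^{\ell+1}$ alone but with the same operator applied to $w^{\ell+1}+u^n$. Using the skew-symmetry $\langle D_-DD_+a,b\rangle=-\langle a,D_-DD_+b\rangle$, all purely linear dispersive contributions then cancel exactly --- this is the discrete analogue of the $L^2$-conservation already exhibited in \eqref{temp:conserve_1} --- so the dispersive term produces \emph{no} growth at any order, and what remains is governed solely by the nonlinear term $\mathbb{G}(v)=\tilde v\,Dv$ (indeed, at lowest order the identity $\langle\mathbb{G}(v),v\rangle=0$ even collapses the nonlinear remainder into a term involving $w^{\ell+1}-w^\ell$).

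The nonlinear contributions are then expanded through the Leibniz-type identities \eqref{Dz_1z_2}--\eqref{D3z_1z_2} (the four summands appearing on the right of \eqref{D3z_1z_2} plausibly accounting for the constant $4$ in $K=\tfrac{4-L}{1-L}$) and estimated using the discrete Sobolev embedding together with the inverse inequalities $\|Dv\|\le\tfrac{1}{\Delta x}\|v\|$ and $\|v\|_\infty\le\tfrac{1}{\Delta x^{1/2}}\|v\|$, so that every surplus factor $\Delta x^{-1}$ is absorbed against $\Delta t$ into the ratio $\lambda$. This produces a recursive estimate of the schematic form $\|w^{\ell+1}\|_{h^3}\le\|u^n\|_{h^3}+C\lambda\,\|w^\ell\|_{h^3}\,\|w^{\ell+1}\|_{h^3}$; inserting the inductive hypothesis $\|w^\ell\|_{h^3}\le K\|u^n\|_{h^3}$ and the CFL bound \eqref{CNCFL}, which forces $\lambda\|u^n\|_{h^3}\le L/K$ with $L<1$, the values $L$ and $K=\tfrac{4-L}{1-L}$ are precisely calibrated so that the estimate closes to $\|w^{\ell+1}\|_{h^3}\le K\|u^n\|_{h^3}$.

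For convergence I would subtract two consecutive iterations. Since $\mathbb{G}$ is bilinear, $\mathbb{G}(\tfrac{u^n+w^\ell}{2})-\mathbb{G}(\tfrac{u^n+w^{\ell-1}}{2})$ splits into one factor carrying $D(w^\ell-w^{\ell-1})$ and one carrying the difference itself; taking the inner product of the difference equation with $w^{\ell+1}-w^\ell$ again annihilates the dispersive term by skew-symmetry, and the uniform $h^3$-bound together with the inverse inequalities turns the nonlinear remainder into a factor $\theta=C\lambda\|u^n\|_{h^3}\le CL/K<1$. Thus $\|w^{\ell+1}-w^\ell\|\le\theta\|w^\ell-w^{\ell-1}\|$, so $\{w^\ell\}$ is Cauchy in $\ell^2$ and converges to some $u^{n+1}$; passing to the limit in \eqref{iterr} shows $u^{n+1}$ solves \eqref{CNFDscheme}, while the uniform bound survives in the limit by lower semicontinuity of the $h^3$-norm, giving \eqref{stabn}. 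I expect the genuine difficulty to lie entirely in the $D_-DD_+$-energy estimate of the inductive step: expanding $D_-DD_+\mathbb{G}(v)$, keeping the top-order term from losing derivatives (which forces a further discrete summation-by-parts before the inverse inequalities are applied), and tracking the constants sharply enough to recover exactly the stated $K$ and CFL condition.
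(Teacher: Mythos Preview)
Your strategy is sound and would prove the lemma, but it diverges from the paper in how the uniform $h^3$-bound and the contraction are organised. The paper does \emph{not} run a standalone induction of the form $\|w^{\ell+1}\|_{h^3}\le\|u^n\|_{h^3}+C\lambda\|w^\ell\|_{h^3}\|w^{\ell+1}\|_{h^3}$ for every $\ell$. Instead it first derives the \emph{contraction in $h^3$} (not merely in $\ell^2$), namely $\|\Delta w^\ell\|_{h^3}\le\lambda\max\{\|u^n\|_{h^3},\|w^\ell\|_{h^3},\|w^{\ell-1}\|_{h^3}\}\,\|\Delta w^{\ell-1}\|_{h^3}$, carries out your energy computation (inner product with $D_-DD_+(u^n+w^1)$) \emph{only once}, for $\ell=1$, to obtain $\|w^1\|_{h^3}\le 2\|u^n\|_{h^3}$, and then recovers the full bound by telescoping: $\|w^{m+1}\|_{h^3}\le\sum_{\ell=0}^m\|\Delta w^\ell\|_{h^3}+\|u^n\|_{h^3}\le\tfrac{3}{1-L}\|u^n\|_{h^3}+\|u^n\|_{h^3}$. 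This is precisely where $K=\tfrac{4-L}{1-L}$ comes from --- so your guess that the $4$ traces back to the four summands in \eqref{D3z_1z_2} is off. Your direct inductive route is cleaner conceptually and would close (for the general step one gets the same inequality as the paper's $w^1$ bound, with $\mathbb G(u^n)$ replaced by $\mathbb G(\tfrac{u^n+w^\ell}{2})$), but it produces a different admissible constant rather than the stated $K$; the paper's intertwined contraction-plus-telescoping is what makes the specific value $K=\tfrac{4-L}{1-L}$ natural.
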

\begin{proof}
Let us define $\Delta w^\ell := w^{\ell+1} - w^\ell $. Then the scheme \eqref{iterr2} can be represented as 
  \begin{equation}\label{iterrwl}
      \left(1+\frac{1}{2}\Delta t D_-DD_+\right)\Delta w^\ell =-\Delta t \Delta \mathbb{G},
  \end{equation}
where $\Delta\mathbb{G}$ is given by
\begin{align*}
    \Delta \mathbb{G} = \left[\mathbb{G}\left(\frac{u^n+w^\ell}{2}\right) - \mathbb{G}\left(\frac{u^n+w^{\ell-1}}{2}\right)\right].
\end{align*}
  Applying the discrete operator $D_-DD_+$ to \eqref{iterrwl} and taking the inner product with $D_-DD_+\Delta w^\ell$, we get
  \begin{equation*}
     \begin{split}
      \|D_-DD_+\Delta w^\ell\|^2 & = \Delta t \left\langle D_-DD_+\Delta \mathbb{G}, D_-DD_+\Delta w^\ell\right\rangle\\
       &\leq  \Delta t  \|D_-DD_+\Delta \mathbb{G}\| \|D_-DD_+\Delta w^\ell\|.
     \end{split}
  \end{equation*}
  Observe that $\Delta\mathbb{G}$ can also be represented as
  \begin{equation*}
      \Delta \mathbb{G} = - \frac{1}{4}\left[ \widetilde{\Delta w^{\ell-1}} D(u^n+w^{\ell-1}) + \widetilde{(u^n+w^\ell)}D(\Delta w^{\ell-1})\right].
  \end{equation*}
The term $\|D_-DD_+\Delta \mathbb{G}\|$ can be estimated by applying the Lemma A.1 in \cite{holden2015convergence}, so-called discrete Sobolev inequalities, $\norm{u}_{\infty}\leq \norm{u}_{h^1}$ and along with identity \eqref{D3z_1z_2}, we have
  \begin{equation*}
      \begin{split}
      \|D_-DD_+ &\big(\widetilde{\Delta w^{\ell-1}} D(u^n+w^{\ell-1})\big)\|\\
      & \leq  \|D_-\widetilde{\Delta w^{\ell-1}}D_+D(u^n+w^{\ell-1})\|+ \|\widetilde{\Delta w^{\ell-1}}D_-DD_+D(u^n+w^{\ell-1})\|\\& \quad + \|D_+\widetilde{\Delta w^{\ell-1}}D_+D(u^n+w^{\ell-1})\| \quad+ \|D_-DD_+\widetilde{\Delta w^{\ell-1}}DD(u^n+w^{\ell-1})\|
      \\& 
      \leq \|D_-\widetilde{\Delta w^{\ell-1}}\|_{\infty}\|(u^n+w^{\ell-1})\|_{h^3} + \frac{1}{\Delta x}\|\widetilde{\Delta w^{\ell-1}}\|_{\infty}\|u^n+w^{\ell-1}\|_{h^3} \\& \quad+ \|D_+\widetilde{\Delta w^{\ell-1}}\|_{\infty}\|u^n+w^{\ell-1}\|_{h^3} + \|\widetilde{\Delta w^{\ell-1}}\|_{h^3}\|DD(u^n+w^{\ell-1})\|_{\infty} \\&
      \leq \frac{1}{\Delta x} \max \left\{ \|u^n\|_{h^3} , \|w^{\ell}\|_{h^3}, \|w^{\ell-1}\|_{h^3} \right\}\|\Delta w^{\ell-1}\|_{h^3},
      \end{split}
  \end{equation*}
  and similarly, we also have the following estimate
  \begin{equation*}
      \begin{split}
      \|D_-DD_+ \big(\widetilde{(u^n+w^\ell)}D{\Delta w^{\ell-1}}\big)\| 
      \leq \frac{1}{\Delta x} \max \left\{ \|u^n\|_{h^3} , \|w^{\ell}\|_{h^3},\|w^{\ell-1}\|_{h^3} \right\}\|\Delta w^{\ell-1}\|_{h^3}.
      \end{split}
  \end{equation*}
  Combining the above estimates together, we conclude that
  \begin{equation}\label{h3boundd}
       \|D_-DD_+\Delta w^\ell\|\leq \lambda \max \left\{ \|u^n\|_{h^3} , \|w^{\ell}\|_{h^3} ,\|w^{\ell-1}\|_{h^3} \right\}\|\Delta w^{\ell-1}\|_{h^3}.
  \end{equation}
Furthermore, in a similar manner, one can estimate $\|D_+D_-\Delta w^\ell\|$, $\|D_+\Delta w^\ell\|$, and $\|\Delta w^\ell\|$ like \eqref{h3boundd}. Summing up all these estimates provides 
  \begin{equation}\label{h1bound}
       \|\Delta w^\ell\|_{h^3}\leq \lambda \max \left\{ \|u^n\|_{h^3} , \|w^{\ell}\|_{h^3} ,\|w^{\ell-1}\|_{h^3} \right\}\|\Delta w^{\ell-1}\|_{h^3}.
  \end{equation}

Our strategy for proving the convergence and boundedness of the sequence $\{w^\ell\}$ is to demonstrate that it is a Cauchy sequence. We proceed by induction, starting with the observation that $w^1$ satisfies 
\begin{equation}\label{w^1}
     w^{1} = u^n - \Delta t \mathbb{G}(u^n) - \Delta t D_-DD_+\Big(\frac{u^n+w^1}{2} \Big).
\end{equation}
We show that $w^1$ is $h^3$-bounded by applying the discrete operator $D_-DD_+$ to \eqref{w^1} and taking the inner product with $D_-DD_+(u^n+w^1)$ yields
\begin{equation*}
    \begin{split}
        \|D_-DD_+w^1\|^2 =& \|D_-DD_+u^n\|^2 -\Delta t \langle D_-DD_+\mathbb{G}(u^n) , D_-DD_+(u^n+w^1)\rangle\\
        =& \|D_-DD_+u^n\|^2 -\Delta t \langle D_-DD_+\mathbb{G}(u^n) , D_-DD_+(w^1)\rangle\\
        & - \Delta t \langle D_-DD_+\mathbb{G}(u^n) , D_-DD_+(u^n)\rangle\\
        \leq& \|D_-DD_+u^n\|^2 +\Delta t^2 \|D_-DD_+\mathbb{G}(u^n)\|^2 + \frac{1}{4}\|D_-DD_+w^1\|^2\\ 
        & +\Delta t^2 \|D_-DD_+\mathbb{G}(u^n)\|^2  + \frac{1}{4}\|D_-DD_+u^n\|^2.
    \end{split}
\end{equation*}
In order to estimate the nonlinear part, we again apply the Lemma A.1 in \cite{holden2015convergence} and along with the identity \eqref{D3z_1z_2} to get
  \begin{equation*}
  \begin{split}
       \|D_-DD_+\mathbb{G}(u^n)\| = \| D_-DD_+(\Tilde{u}^nDu^n)\| &\leq
       \|D_-\Tilde{u}^nD_+Du^n\| + \|\Tilde{u}^nD_-DD_+Du^n\| \\&\quad+ \|D_+\Tilde{u}^nD_+Du^n\|+ \|D_-DD_+\Tilde{u}^nDDu^n\| \\ &\leq
       \|D_-\Tilde{u}^n\|_{\infty}\|u^n\|_{h^3} + \frac{1}{\Delta x}\|\Tilde{u}^n\|_{\infty}\|u^n\|_{h^3}  \\&\quad+ \|D_+\Tilde{u}^n\|_{\infty}\|u^n\|_{h^3} + \|\Tilde{u}^n\|_{h^3} \|DDu^n\|_{\infty}
       \\&\leq\frac{2}{\Delta x}\|u^n\|^2_{h^3}.
  \end{split}
  \end{equation*}
Hence we end up with
  \begin{equation}\label{temh1}
      \|D_-DD_+w^1\|\leq \sqrt{\frac{4}{3}}\left(\frac{5}{4}+8\lambda^2\|u^n\|_{h^3}^2\right)^{1/2}\|u^n\|_{h^3}.
  \end{equation}
  The choice of $L$ and $K$ implies
  \begin{equation*}
       \sqrt{\frac{4}{3}}\left(\frac{5}{4}+8\lambda^2\|u^n\|_{h^3}^2\right)^{1/2}\leq 2.
  \end{equation*}
Proceeding in a similar way as above, we estimate the lower-order difference operator. Hence we have
  \begin{equation*}
      \|w^1\|_{h^3} \leq 2 \|u^n\|_{h^3}\leq K \|u^n\|_{h^3}. 
  \end{equation*}
For carrying out the induction argument, let us assume
  \begin{align}
         \label{fis} \|w^\ell\|_{h^3} &\leq K \|u^n\|_{h^3}, \quad \text{for} \quad \ell =1,...,m,\\
         \label{sec} \|\Delta w^\ell\|_{h^3} &\leq L \|\Delta w^{\ell-1}\|_{h^3}, \quad \text{for} \quad \ell  =2,...,m.
  \end{align}
We have demonstrated \eqref{fis} for $m=1$. We prove \eqref{sec} for $m=2$ using the CFL condition \eqref{CNCFL} and the estimate for $w^1$ as follows:
  \begin{equation*}
      \|\Delta w^{2}\|_{h^3} \leq \lambda\max \left\{\|u^n\|_{h^3} , \|w^1\|_{h^3} \right\} \|\Delta w^1\|_{h^3} \leq K\lambda\|u^n\|_{h^3}\|\Delta w^1\|_{h^3} \leq L \|\Delta w^1\|_{h^3}.
  \end{equation*}
Subsequently, we shall establish \eqref{fis} for $m>1$. For this, we observe that
  \begin{align*}
      \|w^{m+1}\|_{h^3} &\leq \sum_{\ell=0}^{m}\|\Delta w^\ell\|_{h^3} +\|u^n\|_{h^3} \\
      &\leq \|w^1-u^n\|_{h^3}\sum_{\ell=0}^{m} L^\ell + \|u^n\|_{h^3}\\
      &\leq \left(\|w^1\|_{h^3} +\|u^n\|_{h^3}\right) \frac{1}{1-L} + \|u^n\|_{h^3} \\
      &\leq \frac{4-L}{1-L} \|u^n\|_{h^3} = K\|u^n\|_{h^3}.
  \end{align*}
Finally, we end up with
  \begin{equation*}
      \|\Delta w^{m+1}\|_{h^3} \leq \lambda K \|u^n\|_{h^3} \|\Delta w^m\|_{h^3} \leq L\|\Delta w^m\|_{h^3},
  \end{equation*}
where \eqref{CNCFL} is incorporated.
Summing up all the above estimates, we have the desired result \eqref{stabn}, and by \eqref{sec}, it is clear that $\{w^\ell\}$ is a Cauchy sequence, hence it converges to $u^{n+1}$. This completes the proof.
\end{proof}
\begin{remark}
    The Lemma \ref{lemma:solv.lemma} implies that the solvability of the scheme \eqref{iterr} at each time step under the CFL condition \eqref{CNCFL}, where $\lambda$ depends on the $n$-th step. We need to show the ratio $\lambda$ between spatial and temporal bound  depends only on the initial data $u_0$ to demonstrate the stability of \eqref{iterr}. 
\end{remark}
The following lemma establishes local a priori bounds of the approximated solution $u^n$ under the $h^3$-norm.
%%%%%%%%%%%%%%%%%%%%%%%%%%%%%%%%%
\begin{lemma}\label{lemma:Stablemma}
    Let $u_0\in H^{3}(\R)$. Assume that $\Delta t$ and $\Delta x$ satisfies 
    \begin{equation}\label{CFLCNi}
        \lambda = \frac{\Delta t}{\Delta x} \leq \frac{L}{KM}
    \end{equation}
    for some $M = M(\|u_0\|_{h^3})$. Then there exist a time $T>0$, depending on $\|u_0\|_{h^3}$, such that 
    \begin{equation}\label{stabcn}
        \|u^n\|_{h^3} \leq C, \qquad \text{for  } n\Delta t\leq T,
    \end{equation}
    where the constant $C=C(\|u_0\|_{h^3})$.
\end{lemma}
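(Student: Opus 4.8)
The plan is to run a continuation (bootstrap) argument in $n$. Iterating the crude growth bound \eqref{stabn} of Lemma \ref{lemma:solv.lemma} would only give $\|u^n\|_{h^3}\le K^n\|u^0\|_{h^3}$, which blows up exponentially and yields no uniform control; the point of this lemma is precisely to replace that by a closed estimate over a fixed time. So I would fix a threshold $M=M(\|u_0\|_{h^3})$, say $M=2\|u^0\|_{h^3}$, and let $N^\star$ be the largest index such that $\|u^m\|_{h^3}\le M$ for every $m\le N^\star$. On this range the CFL condition \eqref{CFLCNi} implies the step-wise condition \eqref{CNCFL}, so by Lemma \ref{lemma:solv.lemma} the iterates $u^{m+1}$ are well defined. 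The goal is then to prove a refined $h^3$ energy estimate keeping $\|u^n\|_{h^3}$ strictly below $M$ for all $n$ with $n\Delta t\le T$, where $T=T(\|u_0\|_{h^3})$; this shows the bootstrap cannot terminate on $[0,T]$ and closes the argument.

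The core is the discrete $h^3$ energy estimate, obtained as in the $L^2$ conservation computation \eqref{temp:conserve_1}. I would apply successively the operators $\mathrm{Id}$, $D_+$, $D_+D_-$ and $D_-DD_+$ to the scheme \eqref{CNFDscheme} and pair the result with the same operator applied to $u^{n+1/2}=\tfrac12(u^n+u^{n+1})$. Using $\langle u^{n+1}-u^n,u^{n+1}+u^n\rangle=\|u^{n+1}\|^2-\|u^n\|^2$, each level produces an exact difference of squared norms on the left; on the right the dispersive contribution $\langle D_-DD_+(\cdot),\cdot\rangle$ vanishes by the skew-symmetry of $D_-DD_+$ already exploited in \eqref{temp:conserve_1}, leaving only the nonlinear term. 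Expanding it with the discrete Leibniz identities \eqref{Dz_1z_2}--\eqref{D3z_1z_2} and estimating factors through the discrete Sobolev inequalities $\norm{v}_\infty\le\tfrac{1}{\Delta x^{1/2}}\norm{v}$ and $\norm{v}_\infty\le\norm{v}_{h^1}$ (Lemma A.1 of \cite{holden2015convergence}), I expect to reach
\[
\|u^{n+1}\|_{h^3}^2\le\|u^n\|_{h^3}^2+C\Delta t\,\big(1+\|u^{n+1/2}\|_{h^3}\big)\|u^{n+1/2}\|_{h^3}^2.
\]
Bounding $\|u^{n+1/2}\|_{h^3}^2\le\tfrac12(\|u^n\|_{h^3}^2+\|u^{n+1}\|_{h^3}^2)$ and $\|u^{n+1/2}\|_{h^3}\le M$ on the bootstrap range turns this into a recursion $a_{n+1}\le a_n+C\Delta t(1+M)\tfrac12(a_n+a_{n+1})$ for $a_n=\|u^n\|_{h^3}^2$; for $\Delta t$ small the implicit term is absorbed on the left, giving $a_{n+1}\le(1+C'\Delta t)a_n$ and hence, by discrete Gronwall, $\|u^n\|_{h^3}^2\le\|u^0\|_{h^3}^2\,e^{C'n\Delta t}$ for $n\Delta t\le T$. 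Choosing $T=T(\|u_0\|_{h^3})$ small enough that $e^{C'T}\|u^0\|_{h^3}^2<M^2$ forces $\|u^n\|_{h^3}<M$ throughout, so $N^\star\ge T/\Delta t$ and \eqref{stabcn} holds with $C=M$.

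The step I expect to be the genuine obstacle is the top-order nonlinear estimate, namely controlling $\langle D_-DD_+(\tilde u^{n+1/2}Du^{n+1/2}),D_-DD_+u^{n+1/2}\rangle$ \emph{without} any loss of powers of $\Delta x$. A direct Cauchy--Schwarz bound, as used in the solvability Lemma \ref{lemma:solv.lemma}, only gives $\|D_-DD_+\mathbb{G}(u^{n+1/2})\|\,\|D_-DD_+u^{n+1/2}\|\lesssim\tfrac{1}{\Delta x}\|u^{n+1/2}\|_{h^3}^3$; the factor $1/\Delta x$ is harmless there (it is absorbed by $\lambda$ via the CFL condition) but would be fatal for an $O(1)$-time stability bound. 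The remedy is to isolate, in the expansion \eqref{D3z_1z_2} with $v=\tilde u^{n+1/2}$ and $w=Du^{n+1/2}$, the single dangerous term $\langle S^-\tilde u^{n+1/2}\,D(D_-DD_+u^{n+1/2}),D_-DD_+u^{n+1/2}\rangle$ in which all differences fall on $u$, and perform a discrete summation by parts via \eqref{Dz_1z_2} to reproduce the classical KdV energy cancellation: the surviving quantity has the form $\langle(\text{first difference of }\tilde u^{n+1/2})\,(D_-DD_+u^{n+1/2})^2\rangle$ plus $O(\Delta x)$ corrections, bounded by $C\|Du^{n+1/2}\|_\infty\|u^{n+1/2}\|_{h^3}^2\le C\|u^{n+1/2}\|_{h^3}^3$ with no inverse power of $\Delta x$. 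Verifying that this cancellation persists for the averaged operator $\tilde{\,\cdot\,}$, and that every remaining term produced by \eqref{D3z_1z_2} is genuinely of lower order, is the computational heart of the proof.
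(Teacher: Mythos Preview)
Your proposal is correct and matches the paper in its computational core: the paper, too, pairs $D_-DD_+$ of the scheme with $D_-DD_+u^{n+1/2}$, expands $\langle D_-DD_+\mathbb{G}(u),D_-DD_+u\rangle$ via \eqref{D3z_1z_2} into four pieces $\mathcal I_1,\dots,\mathcal I_4$, and isolates exactly the dangerous term $\mathcal I_2=\langle S^-\tilde u\,D(D_-DD_+u),D_-DD_+u\rangle$ you single out, handling it by the summation-by-parts identity \eqref{Dz_1z_2} to recover a bound $C\|D_-DD_+u\|\|u\|_{h^3}^2$ with no inverse power of $\Delta x$.

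Where you diverge is in the closing step. The paper works at the level of norms rather than squared norms, dividing by $\|D_-DD_+u^{n+1}\|+\|D_-DD_+u^n\|$ to obtain directly $\|u^{n+1}\|_{h^3}\le\|u^n\|_{h^3}+C\Delta t\,\|u^{n+1/2}\|_{h^3}^2$; it then invokes Lemma~\ref{lemma:solv.lemma} to bound $\|u^{n+1/2}\|_{h^3}\le\tfrac{K+1}{2}\|u^n\|_{h^3}$, producing the quadratic recursion $a_{n+1}\le a_n+\tfrac{\Delta t}{2}((K+1)a_n)^2$, and closes by comparison with the Riccati ODE $y'=\tfrac12(K+1)^2y^2$, setting $M:=y(T)$ for any $T$ below the blow-up time. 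Your bootstrap-plus-Gronwall route is an equally valid alternative; one small point to tighten is that at the last bootstrap index $n=N^\star$ you use $\|u^{n+1/2}\|_{h^3}\le M$ before $\|u^{n+1}\|_{h^3}\le M$ has been established --- this is easily fixed by invoking \eqref{stabn} to get $\|u^{n+1/2}\|_{h^3}\le\tfrac{K+1}{2}M$ instead, exactly as the paper does. The ODE comparison buys a slightly more explicit description of $T$ and $M$, while your Gronwall argument is arguably more standard and avoids the division-by-sum-of-norms maneuver.
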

\begin{proof}
Suppose that $D_-DD_+ u^n = 0$. Then $u^n = 0$ and $u^{n+1} = 0$ since $u^n,u^{n+1} \in \ell^2$. Hence \eqref{stabcn} trivially holds. Thus, we shall assume that $D_-DD_+u^n \neq 0$.
We will apply the discrete operator $D_-DD_+$ to \eqref{CNFDscheme} and then taking the inner product with $D_-DD_+ u^{n+1/2}$ yields
\begin{equation*}
   \frac{1}{2}\|D_-DD_+u^{n+1}\|^2 = \frac{1}{2}\|D_-DD_+u^n\|^2 - \Delta t \langle D_-DD_+\mathbb{G}(u^{n+1/2}), D_-DD_+u^{n+1/2}\rangle,
\end{equation*}
which further becomes
\begin{equation}
  \label{estforDt} \|D_-DD_+u^{n+1}\| - \|D_-DD_+u^n\| \leq 2\Delta t \frac{\langle D_-DD_+\mathbb{G}(u^{n+1/2}), D_-DD_+u^{n+1/2}\rangle}{\|D_-DD_+u^{n+1}\| + \|D_-DD_+u^n\|}.
\end{equation}
For writing convenience, we drop the superscript $n+1/2$ from the above expression and denote $u$ instead of $u^{n+1/2}$ for the moment. The earlier estimate \eqref{D3z_1z_2} yields
\begin{align*}
   \langle D_-DD_+\mathbb{G}(u), D_-DD_+u\rangle =& \langle D_-DD_+(\Tilde{u}Du), D_-DD_+u\rangle \\
   =& \langle D_-\Tilde{u}D_+(Du), D_-DD_+u\rangle+\langle S_-\Tilde{u}D_-DD_+(Du), D_-DD_+u\rangle\\
   &+\langle D_+\Tilde{u}D_+(Du), D_-DD_+u\rangle+\langle D_-DD_+(\Tilde{u})Du, D_-DD_+u\rangle\\
   =&:\mathcal{I}_1 + \mathcal{I}_2 + \mathcal{I}_3 + \mathcal{I}_4.
\end{align*}
Let us estimate one by one. By using the discrete Sobolev inequality $\|D_-\Tilde{u}\|_{\infty} \leq 2(\|D_-DD_+u\| + \|u\|)$, we have
\begin{align*}
   |\mathcal{I}_1| \leq  \norm{D_-\Tilde{u}}_{\infty}\norm{D_-DD_+u}^2 &\leq 2(\norm{D_-DD_+u} + \norm{u})\norm{D_-DD_+u}^2 \\
   & \leq 2\norm{D_-DD_+u} \norm{u}^2_{h^3}.
\end{align*}
In a similar way, we have the following estimates
\begin{align*}
   |\mathcal{I}_3|  \leq 2\norm{D_-DD_+u} \norm{u}^2_{h^3} \qquad \text{and} \qquad |\mathcal{I}_4|  \leq 2\norm{D_-DD_+u} \norm{u}^2_{h^3}.
\end{align*} 
We are left with the estimate of $\mathcal{I}_2$ which can be performed as follows
\begin{align*}
    \mathcal{I}_2 &= \langle S_-\Tilde{u}D_-DD_+(Du), D_-DD_+u\rangle\\
          & \stackrel{v:=D_-DD_+ u}{=} \langle S^{-}\Tilde{u}Dv, v\rangle = -\langle D(S^{-}\Tilde{u}v), v\rangle\\
          &= \frac{\Delta x}{2}\langle D_+(S^-\Tilde{u})Dv,v\rangle + \frac{1}{2}\langle S^-vD(S^-\Tilde{u}),v\rangle,
\end{align*}
and consequently, we have the following estimate
\begin{equation*}
    |\mathcal{I}_2|\leq 2\norm{D_-DD_+u} \norm{u}^2_{h^3}.
\end{equation*}
Incorporating all the estimates on $\mathcal{I}_i,~i=1,2,3,4$, we obtain
\begin{align*}
   2\frac{\left|\langle D_-DD_+\mathbb{G}(u^{n+1/2}), D_-DD_+u^{n+1/2}\rangle\right|}{\|D_-DD_+u^{n+1}\| + \|D_-DD_+u^n\|} &\leq 4\frac{\norm{D_-DD_+u^{n+1/2}} \norm{u^{n+1/2}}^2_{h^3}}{\|D_-DD_+u^{n+1}\| + \|D_-DD_+u^n\|}
   \leq 2\norm{u^{n+1/2}}^2_{h^3}.
\end{align*}
Thus the estimate \eqref{estforDt} turns into
\begin{align*}
   \norm{D_-DD_+u^{n+1}} &\leq  \|D_-DD_+u^n\| + 4\Delta t \frac{\langle D_-DD_+\mathbb{G}(u^{n+1/2}), D_-DD_+u^{n+1/2}\rangle}{\|D_-DD_+u^{n+1}\| + \norm{D_-DD_+u^n}}\\
   & \leq  \norm{D_-DD_+u^n} +  2\Delta t \norm{u^{n+1/2}}^2_{h^3}.
\end{align*}
For the lower-order derivatives, we follow the similar arguments as above, and using the conservation property, we end up with
\begin{align*}
   \norm{u^{n+1}}_{h^3} &\leq  \norm{u^n}_{h^3} +  2\Delta t \norm{u^{n+1/2}}^2_{h^3}.
\end{align*}
By applying the Lemma \ref{lemma:solv.lemma}, the following estimate holds
\begin{align*}
   \norm{u^{n+1}}_{h^3} &\leq  \norm{u^n}_{h^3} + \frac{\Delta t}{2}\big((K+1)\norm{u^n}_{h^3}\big)^2
\end{align*}
provided $\frac{\Delta t}{\Delta x} = {L}/(K\|u^n\|_{h^3})$.
Let us set $a_n= \norm{u^n}_{h^3}$. Then the above estimate can be represented as 
\begin{equation}\label{Auxi_eqn}
    a_{n+1} \leq a_n + \frac{\Delta t}{2}\left((K+1)a_n\right)^2.
\end{equation}
Let $y(t)$ solve the differential equation $$y'(t) = \frac{1}{2}(K+1)^2y(t)^2, \quad y(0) = \|u_0\|_{h^3}.$$ 
It is straightforward to observe that the solution $y$ is blowing up at time ${T}_\infty = (\frac{1}{2}(K+1)^2\|u_0\|_{h^3})^{-1}$. Moreover, $y(t)$ is strictly increasing whenever $t<T_\infty$. If we choose $t<T<T_\infty$, then $y(t)\leq y(T) =:M$ provided $\frac{\Delta t}{\Delta x} = {L}/(K\|u^n\|_{h^3})$ holds. 

Now we claim that $a_n\leq y(t_n)\leq M$ for $t_n\leq T$. This claim is true for $n=0$. Let us assume that it is true for $n=0,\dots,N$. Then
\begin{equation*}
    \begin{split}
        a_{N+1} \leq a_N +\frac{\Delta t}{2}((K+1)a_N)^2 \leq &  y(t_N) +\frac{\Delta t}{2}((K+1)y(t_N))^2\\
        \leq & y(t_N) +\int_{t_N}^{t_{N+1}}\frac{1}{2}((K+1)y(t_N))^2\,dt\\
        \leq &   y(t_N) +\int_{t_N}^{t_{N+1}}y'(s)\,ds = y(t_{N+1}).
    \end{split}
\end{equation*}
This proves that $a_n \leq y(T) = M$ for all $n$ such that $t_n\leq T$. Thus $\|u^n\|_{h^3}\leq M \leq C(\|u_0\|_{h^3})$. This completes the proof.
\end{proof}

We seek for a temporal bound of the approximations which is crucial for the convergence proof. From the scheme \eqref{CNFDscheme}, we can rewrite it by taking the $\ell^2$-norm on both sides:
\begin{equation*}
\norm{D_+^tu^n} \leq \norm{\mathbb{G}(u^{n+1/2})} + \norm{D_-DD_+u^{n+1/2}}.
\end{equation*}
Since we have the estimate $\norm{\mathbb{G}(u^{n+1/2})} \leq \norm{\Tilde{u}^{n+1/2}}_\infty\norm{Du^{n+1/2}}\leq \norm{u^{n+1/2}}_{h^3}\leq C$ and taking into account the Lemma \ref{lemma:Stablemma}, we deduce
\begin{equation}\label{Temp_Bound}
    \norm{D_+^tu^n} \leq C.
\end{equation}
 
Now we will state the main result of this section. We shall establish the convergence of the approximate solution to the classical solution for $t<T$. We follow the approach of Sjöberg \cite{sjoberg1970korteweg} to prove the following result.
\begin{theorem}\label{Conv_THM}
Suppose the initial data $u_0\in H^3(\R)$.
Let $\{u^n\}$ be a sequence of difference approximations obtained by the scheme \eqref{CNFDscheme}. Furthermore assume that $\Delta t = \mathcal{O}(\Delta x)$. Then there exists a finite time $T$ and a constant $C$, depending only on $\|u_0\|_{h^3}$ such that 
    \begin{align}
        \label{bb1}\|u_{\Delta x}(\cdot,t)\|_{L^2(\mathbb{R})} &\leq \|u_0\|_{L^2(\mathbb{R})},\\
       \label{bb2} \|\partial_xu_{\Delta x}(\cdot,t)\|_{L^2(\mathbb{R})} &\leq C,\\
         \label{bb3} \|\partial_tu_{\Delta x}(\cdot,t)\|_{L^2(\mathbb{R})} &\leq C,\\
          \label{bb4} \left\|\partial^3_xu_{\Delta x}(\cdot,t)\right\|_{L^2(\mathbb{R})} &\leq C.
    \end{align}
    Moreover, the sequence of approximate solutions $\{u_{\Delta x}\}_{{\Delta x}\geq 0}$ converges uniformly in $C(\mathbb{R}\times [0,T]) $ to the unique solution of the KdV equation \eqref{kdveqn} as $\Delta x\xrightarrow[]{} 0$.
\end{theorem}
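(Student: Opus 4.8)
The plan is to follow the strategy of Sjöberg: first upgrade the discrete a priori estimates already established into uniform bounds on the interpolant $u_{\Delta x}$, and then extract a convergent subsequence by compactness whose limit is identified as the unique classical solution of \eqref{kdveqn}.

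First I would prove the four bounds \eqref{bb1}--\eqref{bb4}. Each continuous norm of the interpolant is controlled by a discrete norm of the grid function through the interpolation formulas \eqref{interCN}. Indeed, differentiating the cubic spatial interpolant shows that on each cell $[x_j,x_{j+1})$ the quantities $\partial_x u^n$, $\partial_x^2 u^n$ and $\partial_x^3 u^n$ are polynomials whose coefficients are exactly $D_+u^n_j$, $D_+D_-u^n_j$ and $D_-DD_+u^n_j$; integrating over each cell and summing therefore bounds $\norm{\partial_x u_{\Delta x}(\cdot,t)}_{L^2(\R)}$ and $\norm{\partial_x^3 u_{\Delta x}(\cdot,t)}_{L^2(\R)}$ by $\norm{u^n}_{h^3}$, which is uniformly bounded for $t_n\le T$ by Lemma \ref{lemma:Stablemma}; this gives \eqref{bb2} and \eqref{bb4}. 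Bound \eqref{bb1} comes from the exact $L^2$-conservation $\norm{u^n}=\norm{u^0}$ proved after \eqref{temp:conserve_1}, together with the elementary fact that the $L^2$-norm of the cubic interpolant is controlled by the discrete $\ell^2$-norm. Finally \eqref{bb3} follows from the temporal estimate \eqref{Temp_Bound}: since $\partial_t u_{\Delta x}=D_+^t u^n$ on $[t_n,t_{n+1})$, the continuous norm $\norm{\partial_t u_{\Delta x}(\cdot,t)}_{L^2(\R)}$ is again dominated by $\norm{D_+^t u^n}\le C$.

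Next I would run the compactness argument. Bounds \eqref{bb1} and \eqref{bb2} give a uniform bound on $u_{\Delta x}$ in $L^\infty(0,T;H^1(\R))$, so by the one-dimensional embedding $H^1(\R)\hookrightarrow C^{0,1/2}(\R)$ the family is uniformly bounded and equicontinuous in $x$, uniformly in $t$ and $\Delta x$; bound \eqref{bb3} yields equicontinuity in $t$. By the Arzelà--Ascoli theorem there is a subsequence converging uniformly on every compact subset of $\R\times[0,T]$; a tightness estimate for the spatial tails, obtained from the Gagliardo--Nirenberg inequality $\norm{v}_{L^\infty([R,\infty))}^2\le 2\norm{v}_{L^2([R,\infty))}\norm{\partial_x v}_{L^2([R,\infty))}$ together with \eqref{bb1}--\eqref{bb2}, upgrades this to uniform convergence on all of $\R\times[0,T]$, so the limit $u$ lies in $C(\R\times[0,T])$. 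The bounds \eqref{bb2}--\eqref{bb4} are preserved by weak-$*$ lower semicontinuity, placing $u$ in $L^\infty(0,T;H^3(\R))$ with $\partial_t u\in L^\infty(0,T;L^2(\R))$. It then remains to identify $u$ as a solution: I would rewrite \eqref{CNFDscheme} in the equivalent form $D_+^t u^n + \mathbb{G}(u^{n+1/2}) + D_-DD_+u^{n+1/2}=0$, multiply by a test function and sum by parts to obtain a discrete weak formulation, and pass to the limit. The linear terms converge because the difference operators are consistent with $\partial_x$, $\partial_x^3$ and $\partial_t$, with remainders controlled by the uniform $h^3$ bound; the nonlinear term $\mathbb{G}(u^{n+1/2})=\tilde u^{n+1/2}Du^{n+1/2}$ converges to $uu_x$ because the strong (uniform) convergence of $u_{\Delta x}$ handles the product while \eqref{Temp_Bound} guarantees that $u^{n+1/2}$ and the two adjacent time levels share the same limit. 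Since $u\in L^\infty(0,T;H^3(\R))$, the limiting equation holds classically, and the classical $H^3$-solution of \eqref{kdveqn} is unique by the Bona--Smith theory \cite{Bona1975KdV}; hence every subsequence has the same limit and the whole family $\{u_{\Delta x}\}$ converges.

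The main obstacle I anticipate is the passage to the limit in the nonlinear term together with the verification that the consistency (truncation) error of the difference operators vanishes: one must show that replacing $u_x$, $u_{xxx}$ and $u_t$ by $Du$, $D_-DD_+u$ and $D_+^t u$ at the Crank--Nicolson midpoint produces errors tending to zero, which relies delicately on the uniform $h^3$ bound of Lemma \ref{lemma:Stablemma} to control the higher-order differences and on the hypothesis $\Delta t=\mathcal{O}(\Delta x)$ to balance the temporal and spatial truncation contributions.
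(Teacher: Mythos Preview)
Your approach coincides with the paper's: transfer the discrete $h^3$ and temporal estimates of Lemma~\ref{lemma:Stablemma} and \eqref{Temp_Bound} to the interpolant to obtain \eqref{bb1}--\eqref{bb4}, extract a limit by Arzel\`a--Ascoli, identify it as a weak solution via a Lax--Wendroff type passage to the limit (the paper simply cites \cite{holden1999operator} for this step, where you sketch it explicitly), and conclude by uniqueness of the classical $H^3$ solution so that the whole family converges. The only structural difference is in the compactness step: the paper applies Arzel\`a--Ascoli in $C([0,T];L^2(\R))$ using the Lipschitz-in-time bound coming from \eqref{Temp_Bound}, whereas you aim directly for $C(\R\times[0,T])$ via equicontinuity in both variables obtained from the $H^1$ embedding and the time-derivative bound.

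On that route there is a gap in your tightness argument. The Gagliardo--Nirenberg inequality $\norm{v}_{L^\infty([R,\infty))}^2\le 2\norm{v}_{L^2([R,\infty))}\norm{\partial_x v}_{L^2([R,\infty))}$ combined with \eqref{bb1}--\eqref{bb2} yields only a uniform $L^\infty$ \emph{bound}, not decay: \eqref{bb1} is a global $L^2$ estimate and does not by itself force $\norm{u_{\Delta x}(\cdot,t)}_{L^2([R,\infty))}\to 0$ uniformly in $t$ and $\Delta x$ as $R\to\infty$. Uniform $L^2$ tail decay of the approximations would require a separate weighted-energy (localization) argument, which you have not supplied. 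Without it, Arzel\`a--Ascoli gives only uniform convergence on compact subsets of $\R\times[0,T]$; this is already sufficient to pass to the limit in the weak formulation against compactly supported test functions and to recover the regularity of $u$ from \eqref{bb1}--\eqref{bb4}, so the identification and uniqueness parts of your argument stand, but the global uniform convergence in $C(\R\times[0,T])$ as stated is not established by the ingredients you list. The paper's proof, for its part, does not explicitly address this point either and effectively works in the weaker topology $C([0,T];L^2(\R))$.
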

To prove the Theorem \ref{Conv_THM}, we need to define a weak solution of \eqref{kdveqn}.
\begin{definition}\label{defn}
    Let $Q>0$ and $u_0\in L^2(\mathbb{R})$. Then $u\in L^\infty(0,T;L^2(\R))$ is said to be a weak solution of \eqref{kdveqn} if 
    \begin{equation}\label{weaks}
        \int_0^T \int_{\mathbb{R}}\left(\varphi_t u + \varphi_x\frac{u^2}{2} +\varphi_{xxx}u\right)\,dx\,dt + \int_{\mathbb{R}}\varphi(x,0)u_0(x)\,dx = 0,
    \end{equation}
    for all $\varphi\in C_c^\infty((-Q,Q)\times[0,T)) $.
\end{definition}
\begin{proof}[Proof of Theorem \ref{Conv_THM}]
We observe that the approximation $u_{\Delta x}$ defined by \eqref{interCN} is continuous everywhere and continuously differentiable in space. Hence for $x\in [x_{j},x_{j+1})$ and $t\in [t_n,t_{n+1}]$, we have
    \begin{align*}
        \partial_x u_{\Delta x}(x,t) =& D_+u_j^n + (x-x_j)D_+D_-u^n_j
        +\frac{1}{2}(x-x_j)^2D_+DD_-u^n_j \\
        &+ (t-t_n)D_+^t\left(D_+u_j^n + (x-x_j)D_+D_-u^n_j 
        +\frac{1}{2}(x-x_j)^2D_+DD_-u^n_j\right),\\
        \partial^2_x u_{\Delta x}(x,t) =&D_+D_-u^n_j
        +(x-x_j)D_+DD_-u^n_j+ (t-t_n)D_+^t\left(D_+D_-u^n_j 
        +(x-x_j)D_+DD_-u^n_j\right),\\
        \partial^3_x u_{\Delta x}(x,t) =& D_+DD_-u^n_j + (t-t_n)D_+^t\left(D_+DD_-u^n_j\right),\\
        \partial_t u_{\Delta x}(x,t) =& D_+^tu^n(x).
    \end{align*}
These identities imply that the estimates \eqref{bb1}-\eqref{bb4} hold for $t\leq T$.
Due to \eqref{Temp_Bound}, we have the boundedness of $\partial_t u_{\Delta x}$. It further implies that $u_{\Delta x}\in \text{Lip}([0,T];L^2(\mathbb{R}))$. Incorporating \eqref{bb1}-\eqref{bb3} and applying the Arzela-Ascoli theorem, we conclude that the set $\{u_{\Delta x}\}_{\Delta x>0}$ is sequentially compact in $C([0,T];L^2(\mathbb{R}))$. Then there exists a subsequence $\{u_{\Delta x}\}_{j\in\mathbb{N}}$ which converges uniformly in $C([0,T];L^2(\mathbb{R}))$ to some function $u$. 
We claim that the limit $u$ is a weak solution of the Cauchy problem \eqref{kdveqn} in the sense of Definition \ref{defn}.
Hereby we can apply the straightforward modification of the proof of Lax-Wendroff type result in Holden et al. \cite{holden1999operator} to conclude that $u$ is a weak solution.
The bounds \eqref{bb1}-\eqref{bb4} imply that $u$ is eventually a strong solution such that \eqref{kdveqn} holds as an $L^2$-identity. We conclude that the limit $u$ is the unique solution of the KdV equation \eqref{kdveqn} incorporating the initial data $u_0$. Hence the result follows.
\end{proof}
%%%%%%%%%%%%%%%%%%%%%%%%%%%%%%%%%%%%%%%%%%
\subsection{Convergence analysis with $L^2$ initial data}
In this section, we shall establish the convergence of difference approximations generated by the same devised scheme \eqref{CNFDscheme} to a weak solution of the KdV equation \eqref{kdveqn} under the condition that the initial data lacks regularity, i.e. $u_0\in L^2(\mathbb{R})$. Given the inherent lack of smoothness in the initial data, previous conventional estimates cannot be readily applied. Nevertheless, we will adopt the \emph{Kato's theory of smoothing effects} \cite{kato1983cauchy} which asserts that even in the presence of non-smooth initial data, the solution exhibits localized smoothing due to its dispersive nature. This property will enable us to derive estimates which are pivotal for the convergence analysis. In particular, we note that such smoothing effects do not hold for hyperbolic equations. To elaborate further, 
Kato \cite{kato1983cauchy} established that the solution of \eqref{kdveqn} satisfies the following inequality (refer to \cite{kenig1991well, holden2015convergence})
\begin{equation}\label{smooth_effect}
    \left(\int_{-T}^{T} \int_{-R}^{R}|u_x|^2\,dx\,dt\right)^{1/2}\leq C(T,R), \qquad T,~R>0.
\end{equation}
We briefly explain our approach towards the convergence analysis. With the help of \eqref{smooth_effect}, we will show that $u_{\Delta x}\in W$, where the function space $W$ is given by 
\begin{equation*}
     W = \left\{w\in L^2(0,T;H^1(-Q,Q)) \, \big| \, w_t\in L^{4/3}(0,T;H^{-3}(-Q,Q))\right\}.
\end{equation*}
Afterwards, our approach will rely on the Aubin-Simon compactness lemma \cite{dutta2016note} which ensures compactness of the sequence of approximate solutions in the space $L^2(0,T;L^2(-Q,Q))$. 
It is worth noting that a similar methodology was employed in \cite{holden2015convergence} concerning the Euler implicit finite difference scheme. However, we wish to establish improved convergence estimates for our conservative scheme \eqref{CNFDscheme}.

We begin by defining a non-negative function \(p\) that is both smooth and compactly supported. Given \(R > 0\) be a fixed constant,
we define the function \(p(x)\) as follows:
\begin{align}\label{defn_p}
p(x) = 1 + \int_{-\infty}^x \omega(s)^2 \,ds,
\end{align}
where $\omega$ is a non-negative compactly supported smooth function with the following properties: \(\omega(x) = 1\) for \(|x| < R\), \(\omega(x) = 0\) for \(|x| \geq R+1\), and \(0 \leq \omega(x) \leq 1\).
The construction of $p(x)$ ensures that it remains bounded. More precisely, it satisfies
\begin{align*}
    &1 \leq p(x) \leq 1+(2+2R), \quad p_x(x) = 1~~\text{for } |x| < R, \\
    & p_x(x) = 0~~\text{for }|x| \geq R+1, \quad 0\leq p_x(x)\leq 1, \quad \sqrt{p_x}\in C_c^\infty(\R).
\end{align*}
Corresponding to the non-negative function $p$, we define a weighted inner product and associated norm
\begin{equation*}
    \langle u, v \rangle_p: = \langle u, pv\rangle =  \Delta x \sum_{j}p_ju_jv_j, \qquad \norm{u}_p^2 := \langle u, u \rangle_p,
\end{equation*}
where $p_j = p(x_j)$. Thanks to the properties of $p(x)$, we have $$\norm{u}^2\leq \norm{u}_p^{2} \leq (3+2R)\norm{u}^2. $$
We seek to estimate $\langle D_-DD_+ u , u \rangle_p$ which will be essential in our further analysis. 
\begin{lemma}
    Let the function $p(x)$ be defined by \eqref{defn_p}. Then there holds
    \begin{align}\label{u_xxxp}
        \langle D_-DD_+ u , u \rangle_p \geq -C_R \norm{u}^2_p,
    \end{align}
where the constant $C_R$ is given by
\begin{equation}\label{C_R}
    C_R = \max{\big\{\norm{p}_{L^\infty(\R)} , \norm{p_x}_{L^\infty(\R)}, \norm{p_{xx}}_{L^\infty(\R)}, \norm{p_{xxx}}_{L^\infty(\R)}\big\}}.
\end{equation}
\end{lemma}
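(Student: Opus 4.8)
The plan is to establish the discrete inequality by mimicking, step by step, the continuous integration-by-parts identity
$$\int_{\R} p\, u\, u_{xxx}\,dx = \tfrac{3}{2}\int_{\R} p_x\, (u_x)^2\,dx - \tfrac{1}{2}\int_{\R} p_{xxx}\, u^2\,dx,$$
in which every third- and second-order derivative has been moved off and only a sign-definite first-order term (nonnegative because $p_x = \omega^2 \geq 0$) together with a zeroth-order term survive. Since I only need a lower bound, the positive smoothing term may be discarded, and the remaining term is bounded below by $-\tfrac12\norm{p_{xxx}}_{L^\infty(\R)}\norm{u}^2 \geq -C_R\norm{u}_p^2$ via the norm equivalence $\norm{u}^2 \leq \norm{u}_p^2$. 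The task is to reproduce this cancellation structure in the discrete setting.

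First I would rewrite $D_-DD_+ = D\,D_+D_-$ (the constant-coefficient difference operators commute) and use the skew-symmetry $\langle Dv,w\rangle = -\langle v,Dw\rangle$ to obtain
$$\langle D_-DD_+ u, u\rangle_p = \langle D(D_+D_- u), pu\rangle = -\langle D_+D_- u, D(pu)\rangle.$$
Applying the discrete product rule $D(pu) = \bar p\, Du + \bar u\, Dp$ splits this into a main term $-\langle D_+D_- u, \bar p\, Du\rangle$ and a remainder $-\langle D_+D_- u, \bar u\, Dp\rangle$. A further summation by parts on the main term, moving one factor of $D_\pm$ off the second difference $D_+D_- u$, is designed to produce the discrete analogue of $\tfrac12\langle p_x,(D_+u)^2\rangle$, which is nonnegative because $p_x \geq 0$, plus lower-order corrections; this nonnegative piece is simply dropped.

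It then remains to bound every leftover term below by $-C_R\norm{u}_p^2$. For this I would use $\norm{D_\pm p}_\infty \leq \norm{p_x}_{L^\infty(\R)}$, $\norm{D_+D_- p}_\infty \leq \norm{p_{xx}}_{L^\infty(\R)}$, and $\norm{D_-DD_+ p}_\infty \leq \norm{p_{xxx}}_{L^\infty(\R)}$ (the difference operators commute with $\partial_x$ exactly as in the first lemma of the paper), together with Cauchy--Schwarz and the equivalence $\norm{u}^2 \leq \norm{u}_p^2$. The delicate point is that several corrections appear as cross terms of the form $\langle c\,u, D_\pm u\rangle$, where $c$ is a grid function built from discrete derivatives of $p$; here $D_\pm u$ is not controlled by $\norm{u}$, so such terms must themselves be transformed by summation by parts into expressions involving only $u^2$ and a further discrete derivative of $p$. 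This is precisely what generates the dependence on $\norm{p_{xxx}}_{L^\infty(\R)}$ and hence on $C_R$.

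The main obstacle I anticipate is the bookkeeping of the discrete corrections. Unlike the continuous computation, the product rules introduce shift operators $S^\pm$ and averaging operators $\bar{\,\cdot\,}$, so the intermediate expressions contain high-order differences such as $D_+D_- u$, whose norm is only $O(\Delta x^{-2}\norm{u})$. The crux of the argument is to organize the summations by parts so that all such dangerous high-order-difference contributions either cancel identically or recombine into the sign-definite smoothing term, leaving only terms with $O(1)$ coefficients multiplying $u^2$ (or the dropped nonnegative $(D_+u)^2$ term). Verifying that no uncontrolled negative power of $\Delta x$ survives, and that every retained term is genuinely bounded by $\norm{u}_p^2$ rather than by $\norm{D_+u}^2$, is where the care of the proof lies.
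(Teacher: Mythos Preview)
Your proposal is correct and follows essentially the same approach as the paper: both arguments mimic the continuous integration-by-parts identity via discrete summation by parts, identify and discard the nonnegative smoothing contribution of the form $\langle (D_\pm u)^2,\text{discrete }p_x\rangle$, and bound the surviving zeroth-order term $-\tfrac12\langle u^2, D_+DD_- p\rangle$ by $C_R\norm{u}_p^2$. The only cosmetic difference is the order of operations---the paper shifts $D_-$ and $D_+$ across first to reach $\langle Du, D_+D_-(pu)\rangle$ and then expands the second difference of the product in a single step, whereas you shift $D$ first (leaving $D_+D_-u$ on the left) and must perform a second round of summation by parts on both resulting pieces; the two routes are equivalent and yield the same final identity.
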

\begin{proof}
Since all the difference operators commute with each other, we have
\begin{align*}
    \langle D_-DD_+ u , u \rangle_p =& \langle D_-DD_+ u , up \rangle \\=& \frac{1}{2}\langle D u , D_+D_-(u p)\rangle + \frac{1}{2}\langle D u , D_-D_+(u p)\rangle\\
    =&  \frac{1}{2}\langle D u , pD_+D_-u + 2DuD_+p + S^-uD_+D_-p\rangle \\& +\frac{1}{2}\langle D u , pD_+D_-u + 2DuD_-p + S^+uD_+D_-p\rangle \\
    =& \langle D u , pD_+D_-u + 2DuDp + \bar uD_+D_-p\rangle \\
     =& \langle D uD_+D_-u , p \rangle+ 2\langle (D u)^2,Dp\rangle + \langle \bar uD u,D_+D_-p\rangle\\
     =&  \frac{1}{2}\langle D_+(D_-u)^2 , p \rangle+ 2\big\langle (D u)^2,Dp\big\rangle + \frac{1}{2}\langle D u^2,D_+D_-p\rangle\\
     =& 2\big\langle (D u)^2,Dp\big\rangle -\frac{1}{2}\big\langle (D_-u)^2 , D_-p \big\rangle - \frac{1}{2}\langle  u^2,D_+DD_-p\rangle.
\end{align*}
Using the properties of $p$ yields
\begin{align}\label{D3esti}
    2\big\langle (D u)^2,Dp\big\rangle -\frac{1}{2}\big\langle (D_-u)^2 , D_-p \big\rangle &\geq 2\Delta x \sum_{|j\Delta x|\leq R-1} (Du_j)^2 - \frac{1}{2}\Delta x \sum_{|j\Delta x|\leq R-1} (D_-u_j)^2\\&\geq \frac{3}{2}\Delta x \sum_{|j\Delta x|\leq R-1} (D_-u_j)^2= \frac{3}{2}\Delta x \sum_{|j\Delta x|\leq R-1} (D_+u_j)^2  \geq 0. \nonumber
\end{align}
Afterwards, employing \eqref{C_R} in the above estimates we have
\begin{align*}
    \langle D_-DD_+ u , u \rangle_p \geq -C_R \norm{u}^2 \geq -C_R \norm{u}^2_p.
\end{align*}
Hence the result follows.
\end{proof}
We state and prove the following lemma to ensure the solvability of the scheme \eqref{CNFDscheme} whenever initial data $u_0\in L^2(\R)$.
\begin{lemma}\label{lemma:solv.lemmaL2}
Consider the iterative scheme given by \eqref{iterr}. Assume that the following CFL condition satisfies
    \begin{equation}\label{CNCFLL2}
        \lambda \leq \frac{7L}{8K\norm{u^n}_p},
    \end{equation}
where $\lambda = \frac{\Delta t}{\Delta x^{3/2}}$ and $K =\frac{5-L}{1-L}>5$ be a constant with $0<L<1$. Then there exists a function $u^{n+1}$ that solves equation \eqref{CNFDscheme} and $\lim_{\ell\xrightarrow{}\infty}w^{\ell} = u^{n+1}$. Moreover, the following estimate holds:
    \begin{equation}\label{stabn2}
        \norm{u^{n+1}}_p \leq K \norm{u^n}_p.
    \end{equation}
\end{lemma}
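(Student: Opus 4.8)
The plan is to transplant the fixed-point argument of Lemma~\ref{lemma:solv.lemma} from the discrete $h^3$ setting into the weighted $L^2$ framework carried by $\langle\cdot,\cdot\rangle_p$. The essential structural change is that the dispersive operator $D_-DD_+$ is no longer skew-symmetric with respect to $\langle\cdot,\cdot\rangle_p$; in place of the exact cancellation $\langle D_-DD_+u,u\rangle=0$ used before, I would invoke the one-sided bound \eqref{u_xxxp}, namely $\langle D_-DD_+u,u\rangle_p\geq -C_R\norm{u}_p^2$. This introduces a controllable error proportional to $\Delta t\,C_R$, which is precisely what forces the factor $7/8$ and the strengthened threshold $K>5$ in the statement.

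First I would set $\Delta w^\ell:=w^{\ell+1}-w^\ell$, which by \eqref{iterr} satisfies the same relation \eqref{iterrwl}, $\left(1+\tfrac12\Delta t\,D_-DD_+\right)\Delta w^\ell=-\Delta t\,\Delta\mathbb{G}$. Taking the weighted inner product with $\Delta w^\ell$ and applying \eqref{u_xxxp} to the dispersive term gives
\[
\left(1-\tfrac12\Delta t\,C_R\right)\norm{\Delta w^\ell}_p^2\leq \Delta t\,\bigl|\langle \Delta\mathbb{G},\Delta w^\ell\rangle_p\bigr|.
\]
For $\Delta t$ small enough that $1-\tfrac12\Delta t\,C_R\geq \tfrac78$, it remains to estimate the nonlinear pairing. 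Using the factored form $\Delta\mathbb{G}=-\tfrac14\bigl[\widetilde{\Delta w^{\ell-1}}D(u^n+w^{\ell-1})+\widetilde{(u^n+w^\ell)}D\Delta w^{\ell-1}\bigr]$ together with the equivalence $\norm{\cdot}\leq\norm{\cdot}_p\leq\sqrt{3+2R}\,\norm{\cdot}$ and the inverse inequalities $\norm{v}_\infty\leq \Delta x^{-1/2}\norm{v}$ and $\norm{Dv}\leq\Delta x^{-1}\norm{v}$, each of the two products contributes a factor $\Delta x^{-1/2}\cdot\Delta x^{-1}=\Delta x^{-3/2}$. This yields
\[
\bigl|\langle\Delta\mathbb{G},\Delta w^\ell\rangle_p\bigr|\leq \frac{C}{\Delta x^{3/2}}\max\bigl\{\norm{u^n}_p,\norm{w^\ell}_p,\norm{w^{\ell-1}}_p\bigr\}\norm{\Delta w^{\ell-1}}_p\norm{\Delta w^\ell}_p,
\]
so that, with $\lambda=\Delta t/\Delta x^{3/2}$, the analogue of \eqref{h1bound} becomes $\norm{\Delta w^\ell}_p\leq \tfrac87\lambda\,\max\{\dots\}\,\norm{\Delta w^{\ell-1}}_p$. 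The $\Delta x^{3/2}$ scaling of $\lambda$ is thus dictated purely by the two inverse estimates needed to absorb the derivative and the $L^\infty$ factor in the absence of any higher-order control.

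Next I would bound $w^1$. Writing its defining relation (equation \eqref{iterr} at $\ell=0$) and pairing $w^1-u^n$ with $u^n+w^1$ in $\langle\cdot,\cdot\rangle_p$, the dispersive term is again handled by \eqref{u_xxxp} and the nonlinear term $\langle\mathbb{G}(u^n),u^n+w^1\rangle_p$ by the same inverse inequalities; under \eqref{CNCFLL2} this produces an explicit bound $\norm{w^1}_p\leq C_0\norm{u^n}_p$ with $C_0<K$, where the non-vanishing weighted contributions make $C_0$ slightly larger than in the $h^3$ case and hence require $K=\tfrac{5-L}{1-L}>5$. The induction then proceeds exactly as for \eqref{fis}--\eqref{sec}: the CFL condition \eqref{CNCFLL2}, $\lambda\leq \tfrac{7L}{8K\norm{u^n}_p}$, converts the recursion into the contraction $\norm{\Delta w^\ell}_p\leq L\norm{\Delta w^{\ell-1}}_p$, while the geometric series $\sum_\ell L^\ell=(1-L)^{-1}$ together with the $w^1$-bound yields $\norm{w^{m+1}}_p\leq K\norm{u^n}_p$. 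Telescoping shows $\{w^\ell\}$ is Cauchy in $\norm{\cdot}_p$, whence it converges to a limit $u^{n+1}$ solving \eqref{CNFDscheme} and satisfying \eqref{stabn2}.

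I expect the main obstacle to be the nonlinear estimate for $\langle\Delta\mathbb{G},\Delta w^\ell\rangle_p$: unlike the $h^3$ argument, there is no coercive norm to absorb derivatives, so every derivative and every $L^\infty$ factor must be paid for with a negative power of $\Delta x$, and one must verify that the weight $p$ can be moved freely through the discrete product rules (using the commutation of the difference operators and the boundedness of $p,p_x,p_{xx},p_{xxx}$ encoded in $C_R$) without spoiling the bookkeeping that delivers exactly the exponent $3/2$.
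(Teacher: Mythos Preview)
Your proposal is correct and follows essentially the same route as the paper: take the $\langle\cdot,\cdot\rangle_p$-inner product of \eqref{iterrwl} with $\Delta w^\ell$, use \eqref{u_xxxp} to produce the coercivity defect $1-\tfrac12\Delta t\,C_R\geq\tfrac78$ (i.e.\ $C_R\Delta t\leq\tfrac14$), estimate $\Delta\mathbb G$ via $\norm{\cdot}_\infty\leq\Delta x^{-1/2}\norm{\cdot}$ and $\norm{D\cdot}\leq\Delta x^{-1}\norm{\cdot}$ to obtain the contraction with factor $\tfrac87\lambda\max\{\dots\}$, bound $w^1$ by pairing with $u^n+w^1$ to get $\norm{w^1}_p\leq 3\norm{u^n}_p$, and close the induction with the geometric series identity $K=\tfrac{5-L}{1-L}$. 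The only cosmetic difference is that the paper bounds $\norm{\Delta\mathbb G}_p$ directly by placing $\sqrt{p}$ inside the $L^\infty$ factor rather than passing through the norm equivalence $\norm{\cdot}\leq\norm{\cdot}_p\leq\sqrt{3+2R}\,\norm{\cdot}$, but this is the same estimate.
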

\begin{proof} 
Let us set $\Delta w^\ell := w^{\ell+1} - w^\ell $. Then \eqref{CNFDscheme} can be represented as
\begin{equation}\label{iterrwl2}
    \left(1+\frac{\Delta t}{2} D_-DD_+\right)\Delta w^\ell = -\Delta t \left[\mathbb{G}\left(\frac{u^n+w^\ell}{2}\right) - \mathbb{G}\left(\frac{u^n+w^{\ell-1}}{2}\right)\right]=:-\Delta t \Delta \mathbb{G}.
\end{equation}
Taking inner product of \eqref{iterrwl2} with $p\Delta w^\ell$, we obtain
\begin{equation*}
   \begin{split}
    \norm{\Delta w^\ell}_p^2 +\frac{\Delta t}{2}\big\langle D_-DD_+ \Delta w^\ell, \Delta w^\ell \big\rangle_p  &= -\Delta t \big\langle \Delta \mathbb{G}, \Delta w^\ell\big\rangle_p
     \leq  \Delta t  \norm{\Delta \mathbb{G}}_p \norm{\Delta w^\ell}_p.
     \end{split}
\end{equation*}
Using \eqref{u_xxxp}, we deduce
\begin{equation}\label{temp:L2_1}
    \left(1-C_R\frac{\Delta t}{2}\right)\norm{\Delta w^\ell}_p^2 \leq  \Delta t  \norm{\Delta \mathbb{G}}_p \norm{\Delta w^\ell}_p.
\end{equation}
As earlier, we observe that
\begin{equation*}
    \Delta \mathbb{G} =  -\frac{1}{4}\left[ \widetilde{\Delta w^{\ell-1}} D(u^n+w^{\ell-1}) + \widetilde{(u^n+w^\ell)}D{\Delta w^{\ell-1}}\right].
\end{equation*}
The terms involved in $\|\Delta \mathbb{G}\|_p$ can be estimated by applying the Lemma A.1 in  \cite{holden2015convergence} along with identity \eqref{D3z_1z_2} 
\begin{align*}
    \norm{\widetilde{\Delta w^{\ell-1}} D(u^n+w^{\ell-1})}_p &\leq \norm{\sqrt{p}D(u^n+w^{\ell-1})}_{\infty}\norm{\Delta w^{\ell-1}}_p\\
    &\leq \frac{1}{\Delta x^{3/2}}  \big( \norm{u^n}_p + \norm{w^{\ell-1}}_p \big)\norm{\Delta w^{\ell-1}}_p\\
    &\leq \frac{2}{\Delta x^{3/2}} \max \big\{ \norm{u^n}_p , \norm{w^{\ell-1}}_p \big\}\norm{\Delta w^{\ell-1}}_p,
\end{align*}
and similarly
\begin{equation*}
    \norm{\widetilde{(u^n+w^\ell)}D{\Delta w^{\ell-1}}}_p \leq \frac{2}{\Delta x^{3/2}} \max \big\{ \norm{u^n}_p , \norm{w^{\ell}}_p \big\}\norm{\Delta w^{\ell-1}}_p.
\end{equation*}
Combining the above estimates and choosing $\Delta t $ sufficiently small such that $C_R\Delta t \leq \frac{1}{4}$, \eqref{temp:L2_1} reduces to
\begin{equation}\label{hpboundd2}
     \norm{\Delta w^\ell}\leq \frac{8}{7}\lambda \max \big\{ \norm{u^n}_p , \norm{w^{\ell}}_p ,\norm{w^{\ell-1}}_p \big\}\norm{\Delta w^{\ell-1}}_p.
\end{equation}
Afterwards, we use the induction argument to prove that the sequence $\{w^\ell\}$ is Cauchy. We know that $w^1$ satisfies 
\begin{equation}\label{w^12}
     w^{1} = u^n - \Delta t \mathbb{G}(u^n) - \Delta t D_-DD_+\Big(\frac{u^n+w^1}{2} \Big).
\end{equation}
By taking the inner product with $p(u^n+w^1)$ we get
\begin{equation*}
        \|w^1\|_p^2 + \frac{\Delta t}{2}\big\langle D_-DD_+(u^n+w^1) , p(u^n+w^1)\big\rangle = \|u^n\|_p^2 -\Delta t \big\langle\mathbb{G}(u^n) , p(u^n+w^1)\big\rangle
\end{equation*}
which further becomes
\begin{equation*}
        \|w^1\|_p^2 -C_R \frac{\Delta t}{2}\norm{u^n+w^1}_p^2 \leq \|u^n\|_p^2 +\Delta t^2 \norm{\mathbb{G}(u^n)}^2_p +\frac{1}{4} \norm{u^n+w^1}_p^2.
\end{equation*}
Taking into account the estimate \(\norm{\mathbb{G}(u^n)}^2_p = \norm{\Tilde{u}^nDu^n}^2_p \leq \norm{\sqrt{p}u^n}_\infty^2\norm{Du^n}_p^2\leq \frac{1}{(\Delta x^{3/2})^2}\norm{u^n}^4_p\), we have
\begin{align*}
    \left(\frac{1}{2} -C_R\Delta t \right)\|w^1\|_p^2 &\leq \left(\frac{3}{2} +C_R\Delta t \right)\|u^n\|_p^2 +\lambda^2 \norm{u^n}^4_p
\end{align*}
which turns into
\begin{align*}
    \norm{w^1}_p^2 &\leq 4\left(\frac{7}{4} +\lambda^2 \|u^n\|_p^2\right)\|u^n\|_p^2
\end{align*}
provided $\Delta t $ is sufficiently small such that $C_R\Delta t \leq \frac{1}{4}$.
Choice of $L$ and $K$ implies
\begin{equation*}
     2\left(\frac{7}{4} +\lambda^2 \|u^n\|_p^2\right)^{1/2}\leq 3.
\end{equation*}
Finally, we end up with
\begin{equation*}
    \|w^1\|_p \leq 3 \|u^n\|_p \leq K \|u^n\|_p. 
\end{equation*}
For the induction argument, we assume
\begin{align}
       \label{fis1} \|w^\ell\|_p &\leq K \|u^n\|_p \qquad \text{for } \quad \ell =1,...,m,\\
       \label{secd2} \|\Delta w^\ell\|_p &\leq L \|\Delta w^{\ell-1}\|_p \qquad \text{for } \quad \ell  =2,...,m.
\end{align}
We have estimated \eqref{fis1} for $m=1$. We show the estimate \eqref{secd2} for $m=2$. Due to \eqref{hpboundd2} we have the following estimate
\begin{equation*}
    \|\Delta w^{2}\|_p \leq \frac{8}{7}\lambda\max \big\{\|u^n\|_p , \|w^1\|_p \big\} \|\Delta w^1\|_p \leq \frac{8}{7}\lambda K\|u^n\|_p\|\Delta w^1\|_p \leq L \|\Delta w^1\|_p.
\end{equation*}
After that we show \eqref{fis1} for $m>1$,
\begin{align*}
    \|w^{m+1}\|_{p} &\leq \sum_{\ell=0}^{m}\|\Delta w^\ell\|_{p} +\|u^n\|_{p} \leq \|w^1-u^n\|_{p}\sum_{\ell=0}^{m} L^\ell + \|u^n\|_{p}\\
    &\leq \left(\|w^1\|_{p} +\|u^n\|_{p}\right) \frac{1}{1-L} + \|u^n\|_{p} \leq \frac{5-L}{1-L} \|u^n\|_{p} = K\|u^n\|_{p}.
\end{align*}
Moreover, we also get 
\begin{equation*}
    \|\Delta w^{m+1}\|_{p} \leq \frac{8}{7}\lambda K \|u^n\|_{p} \|\Delta w^m\|_{p}\leq L\|\Delta w^m\|_{p}
\end{equation*}
provided \eqref{CNCFL} holds.
Summing up all the above estimates, we obtain the desired estimate \eqref{stabn2} and by \eqref{secd2}, the sequence $\{w^\ell\}$ is Cauchy, hence converges to $u^{n+1}$. This completes the proof.
\end{proof}
\begin{remark}
It is observed that the CFL condition \eqref{CNCFLL2}, imposed in the Lemma \ref{lemma:solv.lemmaL2}, depends on the approximate solution at time step $n$. However, we require the CFL condition to depend on the initial data $u_0$. Hence we need to derive a priori bound for the approximate solution $u^n$. 
\end{remark}
%%%%%%%%%%%%%%%%%%%%%%%
\begin{lemma}\label{lemma:StablemmaL2}
    Let $u_0\in L^2(\R)$. Assume that $\Delta t$ satisfies 
    \begin{equation}\label{CFLCNL22}
        \lambda =\frac{\Delta t}{\Delta x^{3/2}}\leq \frac{7L}{8KM}
    \end{equation}
for some $M$ which only depends on $\|u_0\|_{L^2(\R)}$. Then there exist a finite time $T>0$ and a constant $C$ both depending on $\|u_0\|_{L^2(\R)}$ such that 
    \begin{equation}\label{stabcnL2}
        \|u^n\| \leq C\big(\norm{u^0} ,R\big) \qquad \text{for }~ t_n\leq T.
    \end{equation}
Moreover, there holds $H^1_{\text{loc}}$ bound of the approximate solution
    \begin{equation}\label{H^1B_}
        \Delta t \Delta x \sum_{n=0}^{N-1} \sum_{|j\Delta x|\leq R-1}\big(D_+u_j^n\big)^2 \leq C.
    \end{equation}
\end{lemma}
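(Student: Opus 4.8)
The plan is to run a weighted energy estimate in the $\|\cdot\|_p$ norm, using the $L^2$-conservation of the scheme to control every contribution that is not genuinely local, and using the weighted dispersive estimate \eqref{u_xxxp} together with a discrete Gagliardo--Nirenberg inequality to produce and then absorb the local smoothing term. First I would take the weighted inner product of \eqref{CNFDscheme} with $u^{n+1/2}=\tfrac12(u^n+u^{n+1})$. Since $\langle u^{n+1}-u^n,u^{n+1/2}\rangle_p=\tfrac12(\|u^{n+1}\|_p^2-\|u^n\|_p^2)$, this gives
\begin{equation*}
\|u^{n+1}\|_p^2-\|u^n\|_p^2=-2\Delta t\,\langle \mathbb{G}(u^{n+1/2}),u^{n+1/2}\rangle_p-2\Delta t\,\langle D_-DD_+u^{n+1/2},u^{n+1/2}\rangle_p.
\end{equation*}
For the dispersive term I would use the computation behind \eqref{u_xxxp}, but retain the positive local part \eqref{D3esti} instead of discarding it, so that
\begin{equation*}
\langle D_-DD_+u^{n+1/2},u^{n+1/2}\rangle_p\geq \tfrac32\,\Delta x\sum_{|j\Delta x|\leq R-1}(D_+u_j^{n+1/2})^2-\tfrac12\langle (u^{n+1/2})^2,D_+DD_-p\rangle,
\end{equation*}
and the last term is at most $C_R\|u^{n+1/2}\|^2$. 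Because the scheme is $L^2$-conservative, $\|u^{n+1/2}\|\leq\|u^0\|$, so this piece costs only a constant multiple of $\Delta t$.

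The crux is the nonlinear term $\langle \mathbb{G}(u^{n+1/2}),u^{n+1/2}\rangle_p=\langle \tilde u^{\,n+1/2}Du^{n+1/2},p\,u^{n+1/2}\rangle$, which, unlike the unweighted case, no longer vanishes. Using the discrete product rules and summation by parts as in \eqref{Dz_1z_2}, I would rewrite it, up to commutator terms that are themselves controlled locally, as the discrete analogue of $-\tfrac13\int p_x u^3\,dx$, so that it is dominated by $\|p_x\|_\infty\,\Delta x\sum_{|j\Delta x|\leq R+1}|u_j^{n+1/2}|^3$. Since $\sqrt{p_x}\in C_c^\infty(\R)$, a weighted discrete Gagliardo--Nirenberg inequality bounds this local $\ell^3$ quantity by $C\|u^{n+1/2}\|^{5/2}\|D_+u^{n+1/2}\|_{L^2_{\mathrm{loc}}}^{1/2}+C\|u^{n+1/2}\|^3$; by Young's inequality the first term is at most $\eps\,\Delta x\sum_{\mathrm{loc}}(D_+u_j^{n+1/2})^2+C_\eps\|u^{n+1/2}\|^{10/3}$. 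Choosing $\eps$ small and again replacing every $\|u^{n+1/2}\|$ by $\|u^0\|$ through conservation, the whole nonlinear contribution is absorbed into the smoothing term up to a constant times $\Delta t$.

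Collecting these bounds gives, for some fixed $c>0$,
\begin{equation*}
\|u^{n+1}\|_p^2+c\,\Delta t\,\Delta x\sum_{|j\Delta x|\leq R-1}(D_+u_j^{n+1/2})^2\leq \|u^n\|_p^2+C(\|u^0\|,R)\,\Delta t.
\end{equation*}
Summing over $n$ and telescoping yields $\|u^N\|_p^2\leq\|u^0\|_p^2+C\,t_N$, hence $\|u^N\|_p\leq M:=(\|u^0\|_p^2+CT)^{1/2}$ for $t_N\leq T$, together with a local smoothing bound at the half-levels $u^{n+1/2}$; re-indexing by the triangle inequality and the uniform bound $\|u^n\|_p\leq M$ transfers this to the integer levels to give \eqref{H^1B_}. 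Since $\|u^n\|\leq\|u^n\|_p$ (indeed $\|u^n\|=\|u^0\|$ by conservation), \eqref{stabcnL2} follows. I would phrase this as an induction on $n$: assuming $\|u^k\|_p\leq M$ for $k\leq n$ ensures, via \eqref{CFLCNL22} with this $M$, that the CFL \eqref{CNCFLL2} of Lemma \ref{lemma:solv.lemmaL2} holds at step $n$, so the scheme is solvable and conservation applies; the energy estimate then propagates $\|u^{n+1}\|_p\leq M$ for $t_{n+1}\leq T$. Note $C(\|u^0\|,R)$ is independent of $M$, so the choice of $M$ (and thus of the CFL constant) is self-consistent.

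I expect the nonlinear estimate to be the main obstacle. For $L^2$ data one cannot control any $\ell^\infty$ or $\ell^3$ norm a priori, so the cubic term $\langle\mathbb{G}(u),u\rangle_p$ must be tamed purely by the conserved $L^2$ mass and by the very smoothing term one is trying to generate; this is exactly what forces the Gagliardo--Nirenberg-plus-Young absorption, together with the careful matching of the weight $p_x$ against the local gradient control coming from \eqref{D3esti}, and the faithful tracking of the discrete summation-by-parts commutators so that no globally uncontrolled quantity survives.
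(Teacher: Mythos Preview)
Your proposal is correct in outline but takes a genuinely different route from the paper. The paper does \emph{not} use Gagliardo--Nirenberg or absorption into the smoothing term; instead it bounds the weighted nonlinear term directly by summation by parts as $|\langle \mathbb{G}(u^{n+1/2}),u^{n+1/2}\rangle_p|\leq \tfrac12 C_R\|u^{n+1/2}\|_p^3$, drops the positive smoothing term from the left-hand side, and combines this with Lemma~\ref{lemma:solv.lemmaL2} (which gives $\|u^{n+1/2}\|_p\leq K\|u^n\|_p$) to obtain the recursion $a_{n+1}\leq a_n+C_R\tfrac{\Delta t}{2}K(a_n+Ka_n^2)$ for $a_n=\|u^n\|_p$. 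It then compares with the blow-up ODE $y'=\tfrac{C_RK}{2}(y+Ky^2)$, takes $T$ strictly less than the blow-up time, and sets $M=y(T)$. The smoothing bound \eqref{H^1B_} is recovered afterwards by going back to \eqref{H^1bo} and summing in $n$.

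Your argument instead exploits the $L^2$-conservation of the scheme (established in the paper but not used in its proof of this lemma) to replace every global $\|u^{n+1/2}\|$ by $\|u^0\|$, and uses a localized discrete Gagliardo--Nirenberg inequality plus Young to absorb the cubic contribution into the very smoothing term being generated. This yields a \emph{linear} growth $\|u^{n+1}\|_p^2\leq\|u^n\|_p^2+C(\|u^0\|,R)\Delta t$, hence a bound valid for \emph{any} $T$, which is strictly stronger than the finite-time statement of the lemma. The price is a more delicate nonlinear estimate. One technical point you should tighten: the smoothing furnished by \eqref{D3esti} controls $D_+u$ only on $\{|x|\leq R-1\}$, while $p_x$ (and hence the local cubic term) is supported on $\{|x|\leq R+1\}$; the absorbing gradient and the local $\ell^3$ norm do not live on the same set, so you must either enlarge the radius of the weight relative to the region on which you seek $H^1_{\mathrm{loc}}$ control, or refine \eqref{D3esti} to retain a (smaller) positive coefficient over the full support of $p_x$. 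Finally, both you and the paper obtain the smoothing bound at the half-levels $u^{n+1/2}$ rather than at the integer levels in \eqref{H^1B_}; only the half-level version is actually used downstream (e.g.\ in Lemma~\ref{lemma:PartialT}), so the ``re-indexing'' you mention is not needed.
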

\begin{proof}
Taking the inner product with $pu^{n+1/2}$ of \eqref{CNFDscheme} and further using the estimates \eqref{D3esti} and \eqref{u_xxxp}, we get
    \begin{align}
       \frac{1}{2}\norm{u^{n+1}}_p^2 +\frac{3}{2}\Delta t\Delta x \sum_{|j\Delta x|\leq R-1} \big(D_+u^{n+1/2}_j\big)^2  &- C_R\frac{\Delta t}{2} \norm{u^{n+1/2}}^2_p\nonumber \\& \label{H^1bo}\leq \frac{1}{2}\norm{u^n}_p^2 - \Delta t \langle \mathbb{G}(u^{n+1/2}), u^{n+1/2}\rangle_p.
       \end{align}
In order to estimate $ \langle \mathbb{G}(u^{n+1/2}), u^{n+1/2}\rangle_p$, we drop the superscript $n+1/2$ for the moment
\begin{align*}
     \langle \mathbb{G}(u), u\rangle_p &=  \langle \Tilde{u}Du, up\rangle = \frac{1}{3} \big\langle (S^+u+u+S^-u)Du, up\big\rangle
     = \frac{1}{3} \langle uDu, up\rangle + \frac{1}{3} \langle Du^2, up\rangle\\
     &= -\frac{1}{3} \langle u, D(u^2p)\rangle + \frac{1}{3} \langle Du^2, up\rangle
     = -\frac{1}{3} \langle \overline{u^2}u, Dp\rangle -\frac{1}{3} \langle u, \overline pDu^2\rangle+ \frac{1}{3} \langle Du^2, up\rangle \\
     &= -\frac{1}{3} \langle \overline{u^2}u, Dp\rangle- \frac{1}{3} \langle uDu^2, \overline p-p\rangle
     = -\frac{1}{3} \langle \overline{u^2}u, Dp\rangle- \frac{1}{3}\frac{\Delta x^2}{2} \langle uDu^2, D_-D_+ p\rangle\\
     &\leq \frac{\Delta x}{3}\norm{u}_\infty^2\norm{u}\norm{D_-D_+p}+  \frac{1}{3}\frac{\Delta x^2}{2} \norm{Du^2}_\infty\norm{u}\norm{D_-D_+p}\\
     &\leq \frac{1}{3}C_R \norm{u}_p^3+\frac{1}{3}\frac{\Delta x}{2} \norm{u}^2_\infty\norm{u}\norm{D_-D_+p}\\
     &\leq \frac{1}{2}C_R \norm{u}_p^3.
\end{align*}
Dropping the positive second term from the left hand side in \eqref{H^1bo} and substituting the above estimate, we have
\begin{align*}
    \norm{u^{n+1}}_p - \norm{u^n}_p &\leq 2C_R\frac{\Delta t}{2}  \frac{\norm{u^{n+1/2}}^2_p + \norm{u^{n+1/2}}_p^3 }{\norm{u^{n+1}}_p + \norm{u^n}_p }\\
    &\leq C_R \frac{\Delta t}{2}  \big(\norm{u^{n+1/2}}_p + \norm{u^{n+1/2}}_p^2\big)\\
    &\leq C_R\frac{\Delta t}{2}  K\big(\norm{u^{n}}_p + K\norm{u^{n}}_p^2\big),
\end{align*}
where we have used \eqref{stabn2}. Set $a_n = \norm{u^{n}}_p$. Then the above estimate can be represented by
       \begin{equation}\label{diffeqn_an}
           a_{n+1} \leq a_n + C_R\frac{\Delta t}{2}  K\big(a_n + Ka_n^2\big).
       \end{equation}
Let $y(t)$ satisfy the differential equation
\begin{equation*}
    y'(t) = C_R\frac{K}{2}\big(y(t)+Ky(t)^2\big),\quad y(0) = \norm{u_0}.
\end{equation*}
It is straightforward to observe that the solution of this equation will blow up at finite time, say  $\widehat{T}$. Also for $t<T:=\frac{\widehat{T}}{2}$, $y(t)$ is strictly increasing and convex. 

Next, we claim that $a_n\leq y(t_n)$ for all $t_n\leq T$ under the assumption that \eqref{CFLCNL22} holds.
We proceed by mathematical induction, clearly $a_0 \leq y(0)$ holds for $n=0$. Let us assume that the claim is true for $n=0,1,2,\dots,m$. Since $0<a_m \leq M:=y(T)$, then \eqref{CFLCNL22} implies that $\lambda$ also satisfies the CFL condition \eqref{CNCFLL2} in Lemma \ref{lemma:solv.lemmaL2}. Applying the Lemma \ref{lemma:solv.lemmaL2}, we have $a_{m+1/2}\leq K a_m$. Then \eqref{diffeqn_an} yields
\begin{align*}
    a_{m+1} &\leq y(t_m) +C_R\frac{\Delta t}{2}K\big(y(t_m) + K y(t_m)^2\big)\\
            &\leq y(t_m) +\int_{t_m}^{t_{m+1}}C_R\frac{K}{2}\big(y(t_m) + K y(t_m)^2\big)\,dt\\
            &\leq y(t_m) +\int_{t_m}^{t_{m+1}}C_R\frac{K}{2}\big(y(t) + K y(t)^2\big)\,dt\\
            &\leq y(t_m) +\int_{t_m}^{t_{m+1}}y'(t)\,dt \leq y(t_{m+1}).
\end{align*}
Hence the claim is established. Since $1\leq p \leq (3+2R)$, we have the required estimate
\begin{equation*}
    \norm{u^n} \leq M \leq C\big(\norm{u^0} ,R \big).
\end{equation*}
Now dropping the first term from the left-hand side in \eqref{H^1bo} and summing it over $n$, we have
\begin{equation*}
   \Delta t\Delta x \sum_{n=0}^{N-1}\sum_{|j\Delta x|\leq R-1} \big(D_+u^{n+1/2}_j\big)^2 \leq C\big(\norm{u^0} ,R \big).
\end{equation*}
Hence $H^1_{\text{loc}}$-estimate is obtained.
\end{proof}
 Before proceeding with the convergence proof, we estimate the temporal derivative of the approximate solution $u_{\Delta x}$.
 \begin{lemma}\label{lemma:PartialT}
Let $\{u^n\}$ be a sequence of approximate solutions generated by the numerical scheme \eqref{CNFDscheme}, $\{u_{\Delta x}\}$ is obtained by interpolation \eqref{interCN}. Furthermore, assume that the hypothesis of Lemma \ref{lemma:StablemmaL2} holds. Then the following estimate holds
     \begin{equation}\label{Time_boundL^2}
         \norm{\partial_t u_{\Delta x}}_{L^{4/3}(0,T;{H^{-3}(-Q,Q)})} \leq C(\norm{u^0},R),
     \end{equation}
     where $Q = R-1$.
 \end{lemma}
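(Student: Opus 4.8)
The plan is to estimate $\partial_t u_{\Delta x}$ directly from the scheme \eqref{CNFDscheme}, which gives $\partial_t u_{\Delta x}(x,t) = D_+^t u^n(x)$ on each time slab, and then to bound the $H^{-3}(-Q,Q)$ norm of $D_+^t u^n$ by duality against test functions $\varphi \in H^3_0(-Q,Q)$. From \eqref{CNFDscheme} we have $D_+^t u^n = -\mathbb{G}(u^{n+1/2}) - D_-DD_+ u^{n+1/2}$, so for any smooth compactly supported $\varphi$ I would write $\langle D_+^t u^n, \varphi\rangle = -\langle \mathbb{G}(u^{n+1/2}),\varphi\rangle - \langle D_-DD_+ u^{n+1/2},\varphi\rangle$ and move the difference operators onto $\varphi$ using the summation-by-parts identities $\langle v, D_\pm w\rangle = -\langle D_\mp v, w\rangle$ established in Section \ref{sec2}. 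The dispersive term becomes $-\langle u^{n+1/2}, D_+DD_- \varphi\rangle$, which is controlled by $\norm{u^{n+1/2}}_{L^2} \norm{\varphi}_{H^3}$ thanks to the consistency of the discrete third derivative with $\partial_x^3$; by the $L^2$-bound \eqref{stabcnL2} this contributes a term bounded by $C\norm{\varphi}_{H^3}$, hence an $L^\infty(0,T)$-in-time (and a fortiori $L^{4/3}$) bound for this piece.

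The nonlinear term is where the local smoothing effect enters, and I expect it to be the main obstacle. Writing $\mathbb{G}(u) = \tilde u\, Du$, I would first rewrite $\langle \tilde u\, D u, \varphi\rangle$ in a conservative form, moving one derivative onto $\varphi$ so that the expression is morally $\tfrac12\langle (u^{n+1/2})^2, D\varphi\rangle$ plus lower-order commutator terms coming from the averaging operators $\tilde{\,\cdot\,}$ and the discrete Leibniz rule. This reduces the estimate to controlling $\int_{-Q}^{Q} (u^{n+1/2})^2\,dx$ against $\norm{\varphi}_{W^{1,\infty}} \le C\norm{\varphi}_{H^3}$. The quantity $\norm{u^{n+1/2}}_{L^2(-Q,Q)}^2$ is not uniformly bounded by the $L^2$ conservation alone in a way strong enough to integrate in time with the right exponent; instead I would invoke the discrete Kato smoothing estimate \eqref{H^1B_}, which gives $\Delta t \sum_n \norm{D_+ u^{n+1/2}}_{L^2(-Q,Q)}^2 \le C$. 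Combined with the Gagliardo–Nirenberg-type interpolation $\norm{u}_{L^4(-Q,Q)}^2 \lesssim \norm{u}_{L^2} \norm{u}_{H^1}$ (discrete version), this yields $\int (u^{n+1/2})^2 |D\varphi| \lesssim \norm{u^{n+1/2}}_{L^2}\,\norm{D_+ u^{n+1/2}}_{L^2(-Q,Q)}\norm{\varphi}_{H^3}$, so that the nonlinear contribution to $\norm{D_+^t u^n}_{H^{-3}}$ is bounded by $C\,\norm{D_+ u^{n+1/2}}_{L^2(-Q,Q)}$.

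Finally I would assemble the time integral. The dispersive piece is bounded uniformly in $t$, so its $L^{4/3}(0,T)$ norm is finite. For the nonlinear piece I have $\norm{D_+^t u^n}_{H^{-3}} \le C(1 + \norm{D_+ u^{n+1/2}}_{L^2(-Q,Q)})$, and raising to the power $4/3$ and summing against $\Delta t$ I would split $\big(\norm{D_+ u^{n+1/2}}_{L^2}\big)^{4/3}$ and apply Hölder's inequality in the discrete time variable with exponents $(3/2, 3)$: since $\Delta t\sum_n \norm{D_+ u^{n+1/2}}_{L^2(-Q,Q)}^2 \le C$ by \eqref{H^1B_} and $\Delta t \sum_n 1 \le T$, this controls $\Delta t\sum_n \norm{D_+ u^{n+1/2}}_{L^2}^{4/3}$ and hence $\norm{\partial_t u_{\Delta x}}_{L^{4/3}(0,T;H^{-3}(-Q,Q))}$. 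The choice of the exponent $4/3$ is precisely dictated by pairing the quadratic nonlinearity's $L^2$-in-space smoothing bound (exponent $2$) with the finite measure of the time interval through this Hölder step, which is why a larger exponent than $4/3$ would not be available from \eqref{H^1B_} alone.
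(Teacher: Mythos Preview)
Your proposal is correct and follows the same overall strategy as the paper: express $\partial_t u_{\Delta x} = D_+^t u^n$ via the scheme, estimate each of the two pieces in $H^{-3}(-Q,Q)$ by duality, and then integrate in time with H\"older's inequality against the smoothing estimate \eqref{H^1B_}. The execution differs in two places. For the dispersive term you shift all three difference operators onto the test function and bound by $\|u^{n+1/2}\|_{L^2}\|\varphi\|_{H^3}$, using only the $L^2$-stability \eqref{stabcnL2}; the paper shifts only two derivatives, obtaining $\|D_-DD_+ u^{n+1/2}\|_{H^{-3}(-Q,Q)} \le C\|D_+u^{n+1/2}\|_{L^2(-Q,Q)}$, which then also needs the smoothing estimate for this piece. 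Your route is a little cleaner here. For the nonlinear term you integrate by parts once and invoke a local Gagliardo--Nirenberg interpolation to reach a bound by $C\|D_+u^{n+1/2}\|_{L^2(-Q,Q)}$; the paper instead keeps $\tilde u\,Du$ in $L^2(-Q,Q)$, introduces an explicit smooth cutoff $\xi$, and controls $\Delta t\sum_n \|\xi u^{n+1/2}\|_\infty^4$ via the discrete Sobolev inequality $\|u\|_\infty \le 2\|u\|^{1/2}\|D_+u\|^{1/2}$. Both routes feed into the same H\"older-in-time step with exponents $(3/2,3)$ and yield the same bound.
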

 \begin{proof}
From the scheme \eqref{CNFDscheme} we have 
     \begin{equation}\label{D_tn}
    D^t_+u_j^{n} =  \Tilde{u}_j^{n+1/2} Du_j^{n+1/2} - D_-DD_+u^{n+1/2}_j,\qquad n\in\mathbb{N}_0,\hspace{0.1cm}j\in\mathbb{Z}.
\end{equation}
Since $\partial_t u_{\Delta x}(x,t) =D^t_+u^{n}(x) $ for $x\in[x_j,x_{j+1})$ and $t\in[t_n,t_{n+1}]$,
we estimate the terms in the right-hand side of \eqref{D_tn} by repetitive use of the H{\"o}lder's inequality and simple use of truncation analysis. Let $\phi\in H^3_0(-Q,Q)$ be any test function 
\begin{align*}
\bigg| \int_{-Q}^{Q} & (D_-DD_+u^{n+1/2})\phi(x) \,dx \bigg|\\
 \leq & \sum_{|j\Delta x|\leq Q} |Du_j^{n+1/2}|\int_{x_j}^{x_{j+1}}|\phi''(x)|\,dx 
+ \sum_{|j\Delta x|\leq Q} |Du_j^{n+1/2}|\int_{x_j}^{x_{j+1}}|D_-D_+\phi(x) - \phi''(x)|\,dx\\
\leq &  \sum_{|j\Delta x|\leq Q} |Du_j^{n+1/2}|\sqrt{\Delta x} \norm{\phi''}_{L^2(x_j,x_{j+1})} \\
&\qquad + \frac{1}{\Delta x^2}\sum_{|j\Delta x|\leq Q} \big|Du_j^{n+1/2} \big|
\left(\int_{x_j}^{x_{j+1}}\int_{x}^{x+\Delta x}\int_{z-\Delta x}^{z}\int_{x}^{\tau} |\phi'''(\theta)|\,d\theta\,d\tau\,dz\,dx\right)\\
\leq & \left(\sum_{|j\Delta x|\leq Q} \Delta x \big|Du_j^{n+1/2} \big|^2\right)^{1/2} \left(\sum_{|j\Delta x|\leq Q}\norm{\phi''}^2_{L^2(x_j,x_{j+1})}\right)^{1/2} \\
&\qquad + \Delta x^{3/2}\sum_{|j\Delta x|\leq Q} \big|Du_j^{n+1/2} \big|\norm{\phi'''(\theta)}_{L^2(x_j,x_{j+1})}\\
\leq& \norm{D_+u^{n+1/2}}_{L^2(-Q,Q)}\norm{\phi''}_{L^2(-Q,Q)} + \Delta x\norm{D_+u^{n+1/2}}_{L^2(-Q,Q)}\norm{\phi'''}_{L^2(-Q,Q)}\\
\leq &C\norm{D_+u^{n+1/2}}_{L^2(-Q,Q)}.
\end{align*}
Therefore, we derive the following estimate
\begin{align*}
    \norm{D_-DD_+u^{n+1/2}}_{H^{-3}(-Q,Q) }\leq C\norm{D_+u^{n+1/2}}_{L^2(-Q,Q)}.
\end{align*}
By using the Lemma \ref{lemma:StablemmaL2} and H{\"o}lder's inequality, it provides
\begin{align*}
    \Delta t\sum_{n=0}^{N}\norm{D_-DD_+u^{n+1/2}}_{H^{-3}(-Q,Q) }^{4/3}&\leq CT^{1/3}\left(\Delta t\Delta x\sum_{n=0}^{N}\sum_{|j\Delta x|\leq Q}\Big|D_+u_j^{n+1/2} \Big|^2\right)^{2/3}\\
    &\leq C\big(\norm{u^0}_{L^2(\R)},R).
\end{align*}
Let us define a $C_c^\infty$ cut-off function $\xi$ such that $0\leq\xi\leq 1$ and
\begin{align*}
   \xi(x) = 
    \begin{cases}
        1, \qquad |x|\leq Q,\\
        0,\qquad |x| \geq Q+1.\\
    \end{cases}
\end{align*}
Set $\xi_{j} = \xi(x_j)$ and consider $\Tilde{u}_j^{n+1/2}Du^{n+1/2}_j$ as a piecewise constant function on $(t_n,t_{n+1})\times (x_j,x_{j+1})$. Using the H{\"o}lder's inequality, we deduce
\begin{align*}
    \Delta t\sum_{n=0}^{N-1}\biggr(\Delta x & \sum_{|j\Delta x|\leq Q}\big|\xi_j\Tilde{u}_j^{n+1/2}Du^{n+1/2}_j\big|^2\biggr)^{2/3} \\ 
    & \leq
     \Delta t\sum_{n=0}^{N-1}\norm{\xi\Tilde{u}^{n+1/2}}_{\infty}^{4/3}\left(\Delta x\sum_{|j\Delta x|\leq Q}\big|Du_j^{n+1/2}\big|^2\right)^{2/3} \\
     \leq & \left(\Delta t \sum_{n=0}^{N-1} \norm{\xi\Tilde{u}^{n+1/2}}^4_{\infty}\right)^{1/3}
     \left(\Delta t \Delta x\sum_{n=0}^{N-1} \bigg(\sum_{|j\Delta x|\leq Q}\big|Du_j^{n+1/2}\big|^2\bigg)\right)^{2/3}\\
     \leq &  C\big(\norm{u^0},R)\left(\Delta t \sum_{n=0}^{N-1} \norm{\xi\Tilde{u}^{n+1/2}}_{\infty}^4\right)^{1/3}.
\end{align*}
By the discrete Sobolev inequality $\norm{u}_{\infty}\leq 2\norm{u}^{1/2}\norm{D_+u}^{1/2}$, and using the properties of $\xi$ and the Lemma \ref{lemma:StablemmaL2}, we derive
\begin{align*}
    \Delta t \sum_{n=0}^{N-1} \norm{\xi u^{n+1/2}}_{\infty}^4 &\leq  2\Delta t \sum_{n=0}^{N-1} \norm{\xi u^{n+1/2}}^2\norm{D_+(\xi u^{n+1/2})}^2\\
    &\leq 2\Delta t \sum_{n=0}^{N-1} \norm{\xi u^{n+1/2}}^2\left( \norm{u^{n+1/2}D_+\xi}^2 + \norm{S^+\xi D_+u^{n+1/2}}^2\right)\\
    &\leq 2\Delta t \sum_{n=0}^{N-1} \norm{ u^{n+1/2}}^4 +  2\Delta t \sum_{n=0}^{N-1}\norm{D_+u^{n+1/2}}^2\\
    & \leq C\big(\norm{u^0},R).
\end{align*}
It implies that $$\Tilde{u}^{n}Du^{n} \in L^{4/3}(0,T;L^2(-Q,Q)) \subset L^{4/3}(0,T;{H^{-3}(-Q,Q)}),$$  where $\Tilde{u}_j^{n}Du^{n}_j$ is a piecewise constant function in $[x_j,x_{j+1})\times[t_n,t_{n+1})$.
As a consequence, from \eqref{D_tn}, we conclude that $D^t_{+} u^n_j \in L^{4/3}(0,T;H^{-3}(-Q,Q))$. Hence the result follows.
\end{proof}
%%%%%%%%%%%%%%%%%%%%%%%%%%%%%%
\begin{theorem} (Convergence to a weak solution)\\
Let $\{u_j^n\}$ be a sequence of difference approximations generated by \eqref{CNFDscheme} and $\{u_{\Delta x\}}$ be defined by \eqref{interCN}. Assume that all the hypothesis of Lemma \ref{lemma:StablemmaL2} holds and $\norm{u_0}_{L^2(\R)}$ is finite. Then there exists a constant $C$ such that 
    \begin{align}
        \label{L1}\norm{u_{\Delta x}}_{L^\infty(0,T;L^2(-Q,Q))} &\leq C,\\
        \label{L2}\norm{u_{\Delta x}}_{L^{2}(0,T;H^1(-Q,Q))} &\leq C,\\
        \label{L3}\norm{\partial_t u_{\Delta x}}_{L^{4/3}(0,T;H^{-3}(-Q,Q))} &\leq C,
    \end{align}
where $Q=R-1$. Furthermore, there exists a sequence $\{u_{\Delta x_j}\}$ converges to a weak solution $u\in L^2(0,T;L^2(-Q,Q))$ of \eqref{kdveqn}, i.e.  as $\Delta x_j \xrightarrow[]{j\xrightarrow{}\infty} 0$,
    \begin{equation}\label{Conv_L2}
        u_{\Delta x_j} \xrightarrow{} u \text{ strongly in } L^2(0,T;L^2(-Q,Q)).
    \end{equation}
\end{theorem}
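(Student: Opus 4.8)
The plan is to establish the three uniform bounds \eqref{L1}--\eqref{L3} first, and then to invoke the Aubin--Simon compactness lemma to extract a strongly convergent subsequence, whose limit we identify as a weak solution via a discrete Lax--Wendroff argument. The bounds \eqref{L1}--\eqref{L3} are essentially already in hand from the preceding lemmas: the $L^\infty(0,T;L^2)$ bound \eqref{L1} follows from the a priori estimate \eqref{stabcnL2} of Lemma \ref{lemma:StablemmaL2} together with the equivalence $\norm{u}^2 \leq \norm{u}_p^2 \leq (3+2R)\norm{u}^2$ and the interpolation \eqref{interCN}. The $L^2(0,T;H^1)$ bound \eqref{L2} is the local smoothing estimate \eqref{H^1B_}: the spatial derivative of the interpolant $u_{\Delta x}$ is controlled by the telescoping $D_+u_j^n$ terms on $|j\Delta x|\leq R-1 = Q$, and the time-integrated sum $\Delta t \Delta x \sum_n \sum_j (D_+u_j^n)^2$ is uniformly bounded. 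The temporal bound \eqref{L3} is precisely the content of Lemma \ref{lemma:PartialT}. One must be slightly careful that the interpolant $u_{\Delta x}$ and the grid function $u^{n+1/2}$ appearing in those lemmas are comparable up to constants depending only on $\norm{u_0}_{L^2}$ and $R$, which follows from the explicit piecewise-quadratic form of the interpolation.

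With \eqref{L1}--\eqref{L3} established, I would verify that $u_{\Delta x}$ lies in the space $W = \{w\in L^2(0,T;H^1(-Q,Q)) : w_t\in L^{4/3}(0,T;H^{-3}(-Q,Q))\}$ introduced earlier. Since the embedding $H^1(-Q,Q) \hookrightarrow L^2(-Q,Q)$ is compact and $L^2(-Q,Q) \hookrightarrow H^{-3}(-Q,Q)$ is continuous, the Aubin--Simon lemma (as cited from \cite{dutta2016note}) applies and yields a subsequence $\{u_{\Delta x_j}\}$ converging strongly in $L^2(0,T;L^2(-Q,Q))$ to some limit $u$, giving \eqref{Conv_L2}. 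This is the step where the local smoothing effect \eqref{smooth_effect}, realized discretely in \eqref{H^1B_}, is indispensable: without the gain of one spatial derivative, no compactness in the interior would be available for merely $L^2$ initial data.

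The final and most delicate step is to show that the limit $u$ is a weak solution in the sense of Definition \ref{defn}, i.e.\ that it satisfies the weak formulation \eqref{weaks}. The strategy is to multiply the scheme \eqref{CNFDscheme} by a test function $\varphi\in C_c^\infty((-Q,Q)\times[0,T))$ evaluated at grid points, sum over $j$ and $n$ by parts (using the discrete summation-by-parts identities $\langle v, Dw\rangle = -\langle Dv, w\rangle$ and $\langle v, D_{\pm}w\rangle = -\langle D_{\mp}v, w\rangle$), and pass to the limit. The linear dispersive term transfers three discrete derivatives onto $\varphi$, producing $\varphi_{xxx}u$ in the limit, and the temporal difference produces $\varphi_t u$ together with the initial data term. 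The main obstacle will be the nonlinear convection term $\tilde{u}^{n+1/2}Du^{n+1/2}$: here the strong $L^2$ convergence from \eqref{Conv_L2} is exactly what allows passage to the limit, since $u_{\Delta x_j}^2 \to u^2$ in $L^1(0,T;L^1(-Q,Q))$ along the subsequence, yielding the term $\varphi_x u^2/2$. One must also control the consistency errors arising from replacing continuous derivatives of $\varphi$ by their discrete analogues and from the half-step averaging $u^{n+1/2}$; these are $\Oh(\Delta x)$ and $\Oh(\Delta t)$ respectively by Taylor expansion and vanish in the limit given $\Delta t = \Oh(\Delta x)$. Rather than reproduce the full Lax--Wendroff computation, I would note that it is a routine, if lengthy, adaptation of the argument in Holden et al.\ \cite{holden1999operator}, the only genuine novelty being that strong compactness for the nonlinear term is supplied by the discrete Kato smoothing estimate specific to our conservative scheme.
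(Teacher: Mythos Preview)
Your proposal is correct and follows essentially the same route as the paper's own proof: the three bounds \eqref{L1}--\eqref{L3} are read off from Lemma~\ref{lemma:StablemmaL2} (via the convex combination $u_{\Delta x}(\cdot,t)=(1-\alpha_n)u^n+\alpha_n u^{n+1}$) and Lemma~\ref{lemma:PartialT}, compactness is obtained through the Aubin--Lions--Simon lemma with $H^1(-Q,Q)\hookrightarrow L^2(-Q,Q)\hookrightarrow H^{-3}(-Q,Q)$, and identification of the limit as a weak solution is deferred to the Lax--Wendroff-type argument of \cite{holden1999operator}. Your sketch of the nonlinear passage to the limit (using strong $L^2$ convergence to get $u_{\Delta x}^2\to u^2$ in $L^1$) is in fact more explicit than what the paper records, but the underlying strategy is identical.
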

\begin{proof}
   By \eqref{interCN}, we can write 
   \begin{equation*}
       u_{\Delta x}(x,t) = (1-\alpha_n(t))u^n(x) + \alpha_n(t)u^{n+1}(x), \qquad t\in [t_n,t_{n+1}), \quad n\in \N,
   \end{equation*}
where $\alpha_n(t) = \frac{t-t_n}{\Delta t} \in [0,1)$. Thus, by the Lemma \ref{lemma:StablemmaL2}, we have
   \begin{equation*}
       \norm{u_{\Delta x}(\cdot,t)}_{L^2(\R)} \leq \norm{u^n}_{L^2(\R)} + \norm{u^{n+1}}_{L^2(\R)}\leq C, \qquad \text{for all } t\in [t_n,t_{n+1}), \quad n\in \N.
   \end{equation*}
Hence \eqref{L1} is obtained.

To prove \eqref{L2}, we observe that
   \begin{equation*}
       \partial_x u_{\Delta x}(x,t) = (1-\alpha_n(t))\partial_x u^n(x) + \alpha_n(t)\partial_x u^{n+1}(x), \qquad t\in [t_n,t_{n+1}), \quad n\in \N.
   \end{equation*}
From \eqref{interCN} we obtain
   \begin{equation*}
       \partial_x u^n  = D_+u^n_j \qquad x\in[x_j,x_{j+1}),\quad j\in\Z.
   \end{equation*}
As a consequence, we have the following estimate
   \begin{align*}
       \norm{\partial_x u_{\Delta x}}_{L^{2}(0,T;L^2(-Q,Q))} =& \int_0^T \norm{\partial_x u_{\Delta x}(\cdot,t)}_{L^2(-Q,Q)}^2\,dt\\
        =& \sum_{n} \int_{t_n}^{t_{n+1}} \norm{(1-\alpha_n(t))\partial_x u^n + \alpha_n(t)\partial_x u^{n+1}}_{L^2(-Q,Q)}^2\,dt\\
       \leq& 2\sum_{n} \int_{t_n}^{t_{n+1}}(1-\alpha_n(t))^2 \norm{\partial_x u^n}_{L^2(-Q,Q)}^2 + \alpha_n(t)^2\norm{\partial_x u^{n+1}}_{L^2(-Q,Q)}^2\,dt\\
        \leq& 2\sum_{n} \frac{1}{\Delta t^2}\int_{t_n}^{t_{n+1}}(t_{n+1}-t)^2 \norm{\partial_x u^n}_{L^2(-Q,Q)}^2\,dt \\
         &+ 2\sum_{n}\frac{1}{\Delta t^2}\int_{t_n}^{t_{n+1}}(t-t_n)^2\norm{\partial_x u^{n+1}}_{L^2(-Q,Q)}^2\,dt\\
         \leq& 2\Delta t\sum_{n} \norm{\partial_x u^n}_{L^2(-Q,Q)}^2 
         +\norm{\partial_x u^{n+1}}_{L^2(-Q,Q)}^2\\
         \leq& 2\Delta t\sum_{n}\Delta x\sum_{|j\Delta x|\leq Q}( D_+u_j^n)^2
         +(D_+u_j^{n+1})^2\\
         \leq & C\big(\norm{u_0}_{L^2(\R)},R).
   \end{align*}
Hence \eqref{L2} holds. The estimate \eqref{L3} follows from the Lemma \ref{lemma:PartialT}. Since the space $H^1(-Q,Q)$ is compactly embedded in $L^2(-Q,Q)$ and $L^2(-Q,Q)$ is continuously embedded in $H^{-3}(-Q,Q)$, and the estimates \eqref{L1}-\eqref{L3} holds, we are in position to apply the Aubin-Lions-Simon compactness Lemma (one can refer to \cite{linares2014introduction}, \cite[Lemma 4.4]{holden2015convergence}). Hence we can extract a subsequence of $\{u_{\Delta x}\}$, still denoted by $\{u_{\Delta x}\}$, which converges strongly to a function $u$ in $L^{2}(0,T;L^2(-Q,Q))$.
The strong convergence allows us to pass the limit in the non-linearity. Thus we can apply the Lax-Wendroff like result of \cite{holden1999operator} to conclude that the limit $u$ is a weak solution of \eqref{kdveqn}. Hence we establish \eqref{Conv_L2}. This completes the proof.
\end{proof}
%%%%%%%%%%%%%%%%%%%%%%%%%%%
\section{Convergence rate of the scheme}\label{sec4}
In this section, we derive the error estimates in both time and space for the classical solution of KdV equation \eqref{kdveqn} assuming that $u_0$ is sufficiently smooth.
We start with the consistency of nonlinear term $uu_x$,  which is approximated by $\mathbb{G}(u) = \Tilde{u}Du$. It is observed that
\begin{align}
\mathbb{G}(u) = \Tilde{u}Du = \frac{1}{3}(S^+u + u +S^-u) Du = \frac{1}{3}uDu + \frac{1}{3}D(u^2).
\end{align}
A simple use of truncation error analysis and smoothness of $u$ implies that as $\Delta x \xrightarrow{} 0$,
\[\mathbb G(u) - uu_x =\mathcal{O}(\Delta x^2).\]
Now we derive the error estimate in the following theorem:
\begin{theorem}
Let $u$ be a classical solution of the KdV equation \eqref{kdveqn} and $u^n$ be the approximate solution generated by the devised scheme \eqref{CNFDscheme}. Then for $t_n\leq T$, there holds 
    \begin{equation}\label{err_est}
    \norm{u_n - u(t_n)} \leq C(u,T)\big(\Delta x^2+\Delta t^2\big).
    \end{equation}
\end{theorem}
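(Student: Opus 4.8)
The plan is to pit a discrete energy estimate on the error against a consistency estimate and close with a discrete Grönwall argument, exactly mirroring the $L^2$-conservation computation \eqref{temp:conserve_1} but now carrying a truncation source. Write $v^n_j := u(x_j,t_n)$ for the exact classical solution restricted to the grid, set $v^{n+1/2} := \tfrac12(v^n+v^{n+1})$ so as to match the Crank--Nicolson structure, and define the error $e^n := u^n - v^n$; since $u^0_j = u_0(x_j) = v^0_j$ we have $e^0 = 0$.

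First I would establish the consistency error. Inserting the smooth solution into \eqref{CNFDscheme} defines
\[
\tau^n_j := D^t_+ v^n_j + \tilde v^{n+1/2}_j\, D v^{n+1/2}_j + D_-DD_+ v^{n+1/2}_j ,
\]
and, using that $u$ solves \eqref{kdveqn} at the midpoint $(x_j,t_{n+1/2})$, a Taylor expansion shows each discrete operator is a second-order approximation of its continuous counterpart: the centred-in-time quotient $D^t_+ v^n$ and the average $v^{n+1/2}$ reproduce $u_t$ and $u$ at $t_{n+1/2}$ up to $\mathcal{O}(\Delta t^2)$ (this is precisely where the midpoint symmetry of Crank--Nicolson is essential), the already-recorded consistency $\mathbb{G}(u)-uu_x = \mathcal{O}(\Delta x^2)$ handles the convection term, and the symmetric five-point stencil $D_-DD_+$ approximates $u_{xxx}$ to $\mathcal{O}(\Delta x^2)$. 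Summing the pointwise bounds in $\ell^2$ and invoking the integrability of the relevant high-order derivatives of $u$ yields $\norm{\tau^n} \leq C(u,T)(\Delta x^2 + \Delta t^2)$. This step silently demands enough space and time regularity of $u$, which is what the ``sufficiently smooth'' hypothesis supplies.

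Next I would subtract the scheme for $v$ from that for $u$ to obtain the error equation
\[
D^t_+ e^n = -\big(\tilde u^{n+1/2} D e^{n+1/2} + \tilde e^{n+1/2} D v^{n+1/2}\big) - D_-DD_+ e^{n+1/2} - \tau^n ,
\]
using the bilinear splitting $\mathbb{G}(u^{n+1/2}) - \mathbb{G}(v^{n+1/2}) = \tilde u^{n+1/2} D e^{n+1/2} + \tilde e^{n+1/2} D v^{n+1/2}$. Taking the inner product with $e^{n+1/2}$, the dispersive term drops out by skew-symmetry, $\langle D_-DD_+ e^{n+1/2}, e^{n+1/2}\rangle = 0$, and the Crank--Nicolson identity gives $\langle D^t_+ e^n, e^{n+1/2}\rangle = \tfrac{1}{2\Delta t}(\norm{e^{n+1}}^2 - \norm{e^n}^2)$. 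The term $\langle \tilde e^{n+1/2} D v^{n+1/2}, e^{n+1/2}\rangle$ is bounded at once by $\norm{Dv^{n+1/2}}_\infty\norm{e^{n+1/2}}^2 \leq C(u)\norm{e^{n+1/2}}^2$, and the source term by $\tfrac12(\norm{\tau^n}^2 + \norm{e^{n+1/2}}^2)$.

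The delicate term is $\langle \tilde u^{n+1/2} D e^{n+1/2}, e^{n+1/2}\rangle$, which does \emph{not} vanish, unlike the conservative term $\langle \tilde e De, e\rangle = 0$ exploited in the $L^2$ estimate. I would control it by summation by parts: writing $2\langle \tilde u De, e\rangle = \langle(\tilde u - \overline{\tilde u})De, e\rangle - \langle \bar e D\tilde u, e\rangle$ via \eqref{Dz_1z_2} and $\langle v, Dw\rangle = -\langle Dv, w\rangle$; since $\tilde u - \overline{\tilde u} = \mathcal{O}(\Delta x^2\norm{u_{xx}}_\infty)$ and $\norm{De}\leq \Delta x^{-1}\norm{e}$, the first piece is $\mathcal{O}(\Delta x)\norm{e}^2$ and the second is at most $\norm{D\tilde u}_\infty\norm{e}^2 \leq C(u)\norm{e}^2$. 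Collecting the estimates gives $\norm{e^{n+1}}^2 - \norm{e^n}^2 \leq C(u)\Delta t\,\norm{e^{n+1/2}}^2 + \Delta t\norm{\tau^n}^2$, and bounding $\norm{e^{n+1/2}}^2 \leq \tfrac12(\norm{e^{n+1}}^2 + \norm{e^n}^2)$ places this in discrete Grönwall form. For $\Delta t$ small enough to absorb the implicit coefficient, iterating from $e^0 = 0$ over $t_n \leq T$ and inserting $\norm{\tau^n} \leq C(u,T)(\Delta x^2 + \Delta t^2)$ produces $\norm{e^n}^2 \leq C(u,T)e^{CT}(\Delta x^2 + \Delta t^2)^2$, and taking square roots yields \eqref{err_est}. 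I expect the two main obstacles to be the consistency estimate --- pinning down the exact orders and the precise regularity of $u$ needed for the $\mathcal{O}(\Delta t^2)$ temporal and $\mathcal{O}(\Delta x^2)$ spatial truncation --- and the non-conservative nonlinear term $\langle \tilde u De, e\rangle$, where the cancellation available in the $L^2$-conservation argument is lost and must be replaced by the summation-by-parts bound above.
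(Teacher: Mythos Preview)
Your proposal is correct and follows the same overall architecture as the paper: subtract, take the $\ell^2$ inner product with $e^{n+1/2}$, let the dispersive term vanish by skew-symmetry, control the nonlinear difference so that only $\norm{e^{n+1/2}}^2$ survives, and close with discrete Gr\"onwall. The only genuine difference is in the algebra used to tame the nonlinear term. The paper exploits the identity $\mathbb{G}(w)=\tfrac13\,wDw+\tfrac13\,D(w^2)$ to rewrite $\mathbb{G}(u^{n+1/2})-\mathbb{G}(u_{\mathrm{ex}})$ so that the piece $\langle \mathbb{G}(e),e\rangle=0$ drops out \emph{structurally}, leaving $\tfrac13\langle u_{\mathrm{ex}}De,e\rangle-\tfrac13\langle Du_{\mathrm{ex}},e^2\rangle$, i.e.\ coefficients involving only the \emph{exact} solution; the residual $\langle u_{\mathrm{ex}}De,e\rangle$ is then bounded by the same summation-by-parts trick you wrote down. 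Your bilinear split $\tilde u\,De+\tilde e\,Dv$ is equally valid, but note a small slip: in your bound of $\langle \tilde u\,De,e\rangle$ the coefficients $\norm{D\tilde u}_\infty$ and $\norm{(\tilde u-\overline{\tilde u})/\Delta x^2}_\infty$ involve the \emph{numerical} solution $u^{n+1/2}$, so writing ``$\leq C(u)$'' requires either invoking the $h^3$-stability Lemma~\ref{lemma:Stablemma}, or --- more cleanly --- first using the very identity $\langle \tilde e\,De,e\rangle=0$ you mention to replace $\tilde u$ by $\tilde v$ before integrating by parts, which recovers exact-solution constants and makes the argument self-contained.
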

\begin{proof}
Let us define $e_n: = u^n - u(t_n)$. Since $u$ is a classical solution of \eqref{kdveqn}, then from \eqref{CNFDscheme} we deduce
    \begin{equation}\label{Erroreqn}
        \frac{e_{n+1}-e_n}{\Delta t} + D_-DD_+e_{n+1/2}: = -\mathbb G(u^{n+1/2}) + \mathbb G(u(t_n+\Delta t/2)) - \mathcal G^n,
    \end{equation}
    where $\mathcal G^n$ is given by
    \begin{align*}
        \mathcal G^n:= \frac{u(t_{n+1})-u(t_n)}{\Delta t} + \mathbb G(u(t_n+\Delta t/2)) & + D_-DD_+u(t_n+\Delta t/2)\\
        & - \big(u_t + uu_x + u_{xxx}\big)(t_n+\Delta t/2).
    \end{align*}
Again by performing the truncation error analysis, we have 
    \begin{align}\label{temp:error_esti}
        &\norm{\frac{u(t_{n+1})-u(t_n)}{\Delta t}-u_t(t_n+\Delta t/2)} + \norm{\mathbb G(u(t_n+\Delta t/2)) - uu_x(t_n+\Delta t/2)}\leq  C(u)(\Delta x^2 + \Delta t^2),\nonumber\\
        &\norm{D_-DD_+u(t_n+\Delta t/2) -u_{xxx}(t_n+\Delta t/2) } \leq C(u)\Delta x^2.
    \end{align}
By taking the inner product with $e_{n+1/2}$, the equation \eqref{Erroreqn} becomes
    \begin{align}\label{temp:error_esti_1}
        \left\langle\frac{e_{n+1}-e_n}{\Delta t},e_{n+1/2}\right\rangle & + \langle D_-DD_+e_{n+1/2}, e_{n+1/2}\rangle  \\
        & = \langle\mathbb G(u(t_n+\Delta t/2))  -\mathbb G(u^{n+1/2}),e_{n+1/2}\rangle 
        - \langle\mathcal G^n , e_{n+1/2}\rangle
        := T_1 + T_2. \nonumber
    \end{align}
We estimate the terms $T_i,~i=1,2$. For convenience, dropping the subscript from $e_{n+1/2}$ and superscript from $u^{n+1/2}$ and denoting $u(t_n+\Delta t/2)=:u_{ex}$, we have
    \begin{align*}
     T_1  = & - \left\langle \mathbb{G}(u)-\mathbb{G}(u_{ex}),e\right\rangle = -\frac{1}{3}\left\langle D\big(u^2-u_{ex}^2\big),e\right\rangle - \frac{1}{3}\left\langle  uDu - u_{ex}Du_{ex},e\right\rangle\\
        =& -\frac{2}{3}\left\langle D\big(u_{ex}e\big),e\right\rangle -\frac{1}{3}\left\langle De^2,e\right\rangle - \frac{1}{3}\left\langle eDu_{ex},e\right\rangle - \frac{1}{3}\left\langle u_{ex}De,e\right\rangle - \frac{1}{3}\left\langle eDe,e\right\rangle\\
        = & \frac{1}{3}\left\langle u_{ex}De,e\right\rangle -\frac{1}{3}\left\langle eDu_{ex},e\right\rangle - \left\langle \mathbb G(e),e\right\rangle \\
        =&\frac{1}{3}\left\langle u_{ex},eDe\right\rangle -\frac{1}{3}\left\langle Du_{ex},e^2\right\rangle
        \leq C\norm{e}^2,
    \end{align*}
where $C = C(\norm{Du_{ex}}_\infty)$ and we have used the fact that
\begin{align*}
    u^2 - u_{ex}^2 = (e+2u_{ex})e.
\end{align*}
We have observed that $\langle D_-DD_+e_{n+1/2}, e_{n+1/2}\rangle=0$ and following estimate holds
\begin{align*}
    T_2 = -\langle\mathcal G^n , e_{n+1/2}\rangle \leq \norm{\mathcal G^n}\norm{e_{n+1/2}}.
\end{align*}
As a consequence, \eqref{temp:error_esti_1} reduces to 
    \begin{align*}
        \norm{e_{n+1}}^2 - \norm{e_{n}}^2 &\leq C\Delta t\norm{e_{n+1/2}}^2 + C \Delta t \norm{\mathcal G^n}\norm{e_{n+1/2}}\\
        &\leq C\Delta t \left(\norm{e_{n+1}}^2 +\norm{e_{n}}^2\right)+ C \Delta t \norm{\mathcal G^n}^2.
    \end{align*}
This further implies, for small $\Delta t$,
\begin{equation*}
    \norm{e_{n+1}}^2 \leq (1+C \Delta t)\norm{e_{n}}^2 + C\Delta t(\Delta t^2 + \Delta x^2)^2,
\end{equation*}
where we have taken into account \eqref{temp:error_esti}.
Since $e^0 = 0$,  we have the following estimate for $t_n\leq T$, 
\begin{equation*}
    \norm{e_{n+1}}^2 \leq e^{CT}\norm{e_0}^2 +Cn\Delta t(\Delta t^2 + \Delta x^2)^2 \leq CT (\Delta t^2 + \Delta x^2)^2,
\end{equation*}
where the constant $C$ may depend on $u$ but independent of $\Delta x$ and $\Delta t$.
Hence the result follows.   
\end{proof}
%%%%%%%%%%%%%%%%%%%%%%%%%%%%%%%%%%%%%%%%%%%%%%%%%%%%%%%%%
\section{Numerical Experiments}\label{sec5}
In our analysis, we provide a series of numerical illustrations of the fully discrete scheme \eqref{CNFDscheme} associated with \eqref{kdveqn}. The conventional approaches typically involve applying a numerical scheme to the periodic version of the problem, considering a sufficiently large domain where the reference solutions tend to zero outside of it, for instance, kindly refer to \cite{holden2015convergence,dutta2016convergence}.
However, in particular, our theoretical study in this paper focuses on the convergence of the approximated solution on the real line. To address this, we discretize the domain that is large enough in space for the reference solutions (exact or higher-grid solutions) to be nearly zero outside of it. Exact solutions are available for some cases, facilitating a rigorous assessment. Additionally, we evaluate our scheme's performance when dealing with initial data lacking smoothness and cases where the exact solution is unknown. In such instances, we employ a reference solution obtained with a significantly higher number of grid points. We validate the presented theoretical results and obtain better convergence rates compared to \cite{holden2015convergence}.

We introduce the relative error as
 \begin{equation*}
     E:= \frac{\|u_{\Delta x}-u\|_{L^2}}{\|u\|_{L^2}},
 \end{equation*}
 where the $L^2$-norms were computed with the trapezoidal rule in the points $x_j$ of the finest grid under consideration.
Hereby we examine the first three specific quantities, namely, \textit{mass}, \textit{momentum}, and \textit{energy} as introduced in \cite{kenig1991well}. These quantities, subject to normalization, are expressed as follows:
\begin{align*}
     C^{\Delta}_1 &:= \frac{\int_{\mathbb{R}} u_{\Delta x}\,dx}{\int_{\mathbb{R}} u_0\,dx},\qquad
     C^{\Delta}_2 := \frac{\|u_{\Delta x}\|_{L^2(\R)}}{\norm{u_0}_{L^2(\R)}},\\
     C^{\Delta}_3:&= \frac{\int_{\R} \left((\partial_x u_{\Delta x})^2 - \frac{(u_{\Delta x})^3}{3}\right)~dx}{\int_{\R} \left((\partial_x u_0)^2 - \frac{(u_0)^3}{3}\right)~dx}.
\end{align*}
Our objective is to preserve these quantities in our discrete setup. It is noteworthy that within the domain of completely integrable partial differential equations, maintaining a greater number of conserved quantities through numerical methodologies generally results in more accurate approximations compared to those preserving fewer quantities.
Furthermore, we analyze the convergence rates of the numerical scheme \eqref{CNFDscheme}, denoted as $R_E$, with varying numbers of elements $N_1$ and $N_2$. This is represented by the expression
\begin{equation*}
     \frac{\ln(E(N_1))-\ln(E(N_2))}{\ln(N_2)-\ln(N_1)},
\end{equation*}
where $E$ is considered as a function of the number of elements $N$.
\subsection{A one-soliton solution}
The family of exact solutions (one soliton) is given by
\begin{equation}\label{onesol}
    v(x,t) = 9\left(1-\tanh^2\left(\sqrt{3}/2(x-3t)\right)\right).
\end{equation}
This solution represents a single `bump' propagating to the right with a velocity of $3$. Our numerical scheme has been tested using the initial data \(u_0 = v(x,-1)\), where the solution at \(t=2\) is denoted by \(v(x,1)\). The solution of \eqref{kdveqn} is computed on a uniform grid with \(\Delta x = 20/N\) over the interval \([-10,10]\). Figure \ref{fig:KDVone} illustrates the convergence of the approximated solution. The Table \ref{tab:TableoneKdV} presents the corresponding error analysis. It is observed that the relative errors are converging to zero at the expected rates.
\begin{table}
    \centering
    \begin{tabular}{||c|c|c|c|c|c||}
        \hline
 N & E & $C^\Delta_1$ & $C^\Delta_2$ & $C^\Delta_3$ & $R_E$ \\
 \hline
 \hline
 2000 & 1.998 & 1.00 & 1.01&1.00& \\
   &  & &&& 1.10 \\
 4000  & 0.931 & 1.00 & 1.00&1.00& \\
  &  & && &1.31\\
 8000 &  0.377 & 1.00 & 1.00& 1.00&\\
   &  & &&& 1.95 \\
 16000  & 0.097   & 1.00 & 1.00&1.00&\\
  &  & & &&1.956\\
 32000  & 0.025  & 1.00 & 1.00& 1.00& \\
\hline
    \end{tabular}
   \caption{\label{tab:TableoneKdV}Relative error for one soliton solutions.}
\end{table}

\begin{figure}
    \centering
    \includegraphics[width=0.8\linewidth, height=8cm]{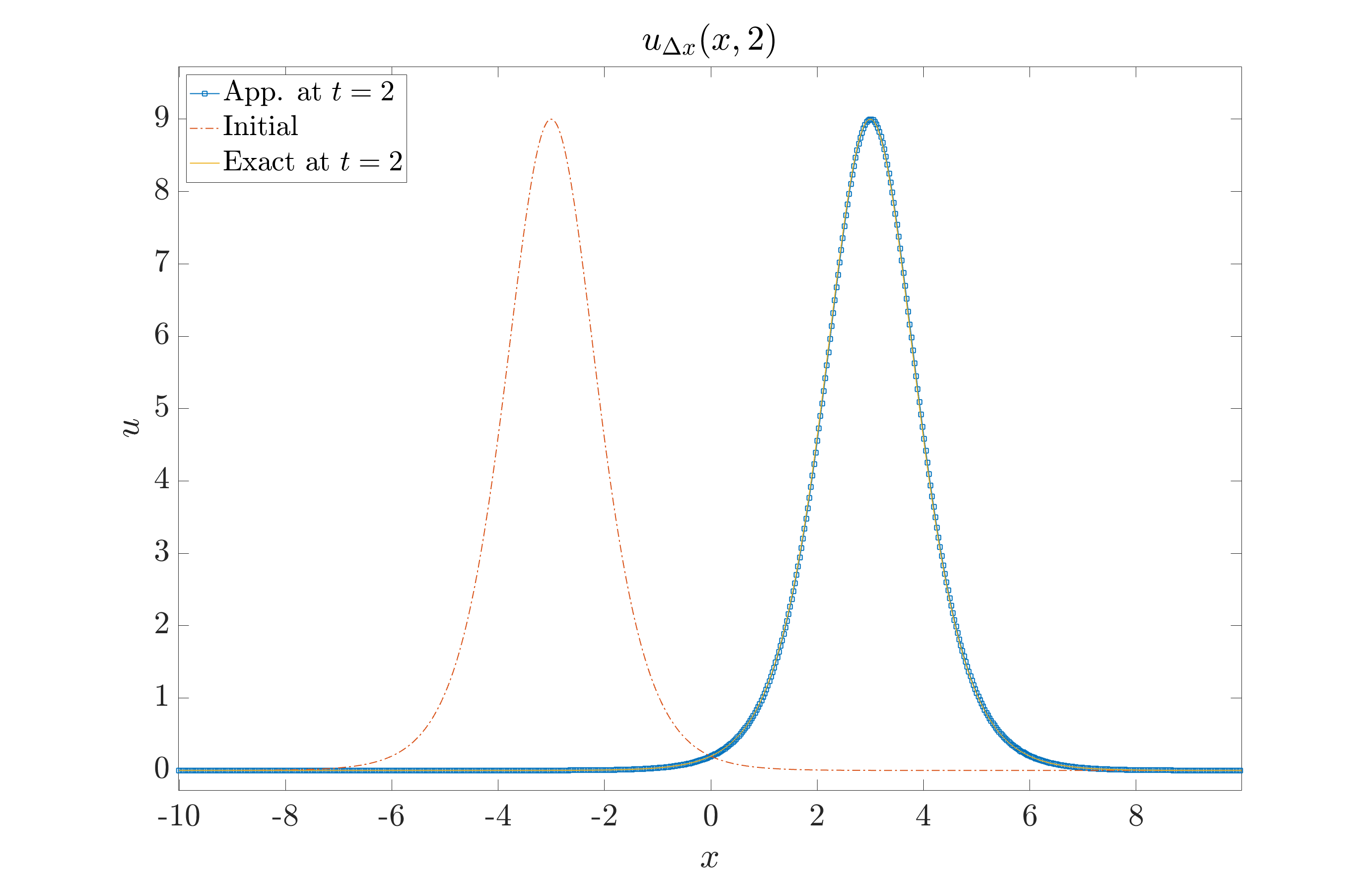}
    \caption{The exact and numerical soultion at $t=2$ with the initial data $v(x,-1)$ with $N=2000$.}
    \label{fig:KDVone}
\end{figure}
\subsection{Two soliton solution}
From a physical viewpoint, solitons with different shapes demonstrate different speeds, establishing a connection between the height and speed of a soliton. A taller soliton moves faster than a shorter one. When two solitons move across a surface, the taller soliton overtakes the shorter one, and both solitons remain unchanged after the collision. This situation introduces a significantly more complex computational challenge than solving for a solitary soliton solution.

In the case of two soliton, the family of exact solutions (see \cite{dutta2015convergence}) of the KdV equation is given by 
 \begin{equation}\label{twosol}
     w(x,t) = 6(b-a) \frac{b \csch^2\left(\sqrt{b/2}(x-2bt)\right)+a \sech^2\left(\sqrt{a/2}(x-2at)\right)}{\left(\sqrt{a}\tanh\left(\sqrt{a/2}(x-2at)\right) - \sqrt{b} \coth\left(\sqrt{b/2}(x-2bt)\right)\right)^2}.
\end{equation}
for some constants $a$ and $b$. We have considered the parameters $a=0.5$ and $b=1$, and the initial data $u_0(x) = w(x,-10)$.
We computed the approximated solution at time $t = 20$ to compare with the exact solution $w(x, 10)$. The Figure \ref{fig:KdVtwoCN} represents the exact solution at $t=-10$, $t=0$ and $t=10$ along with numerical solution at $t=0$, $t=10$ and $t = 20$. Since the wave is relatively narrow, the $L^2$-error assumes significant proportions. Table \ref{tab:TableKDVCN} demonstrates the relative $L^2$-errors for the two soliton simulations.
%%%%%%%%%% Rate of convergence table

\begin{table}
    \centering
    \begin{tabular}{||c|c|c|c|c|c||}
        \hline
 N & E & $C^\Delta_1$ & $C^\Delta_2$ & $C_3^\Delta$&$R_E$ \\
 \hline
 \hline
  500  & 6.010 & 1.01 & 1.00 & 1.00 &\\
  &  &  &  & & 1.70\\
 1000  & 1.848 & 1.00 & 1.00 &1.00 & \\
  &  &  &  && 1.92\\
 2000 & 0.488 & 1.00 & 1.00 &1.00 &\\
   &  &  &  & &1.93 \\
 4000 & 0.128 & 1.00 & 1.00 &1.00 &\\
  &  &  &  & &2.04 \\
  8000 & 0.031 & 1.00 & 1.00 &1.00 &\\
  &  &  &  & & \\
\hline
    \end{tabular}
   \caption{The relative errors between exact and numerical solution obtained at $t=20$ with the initial data $u_0(x) = w(x,-10)$.}
   \label{tab:TableKDVCN}
\end{table}
   %%%%%%%%%%%%%%%%%%%%%%%%%%% figure

\begin{figure}
    \centering
    \includegraphics[width=1 \linewidth, height=9cm]{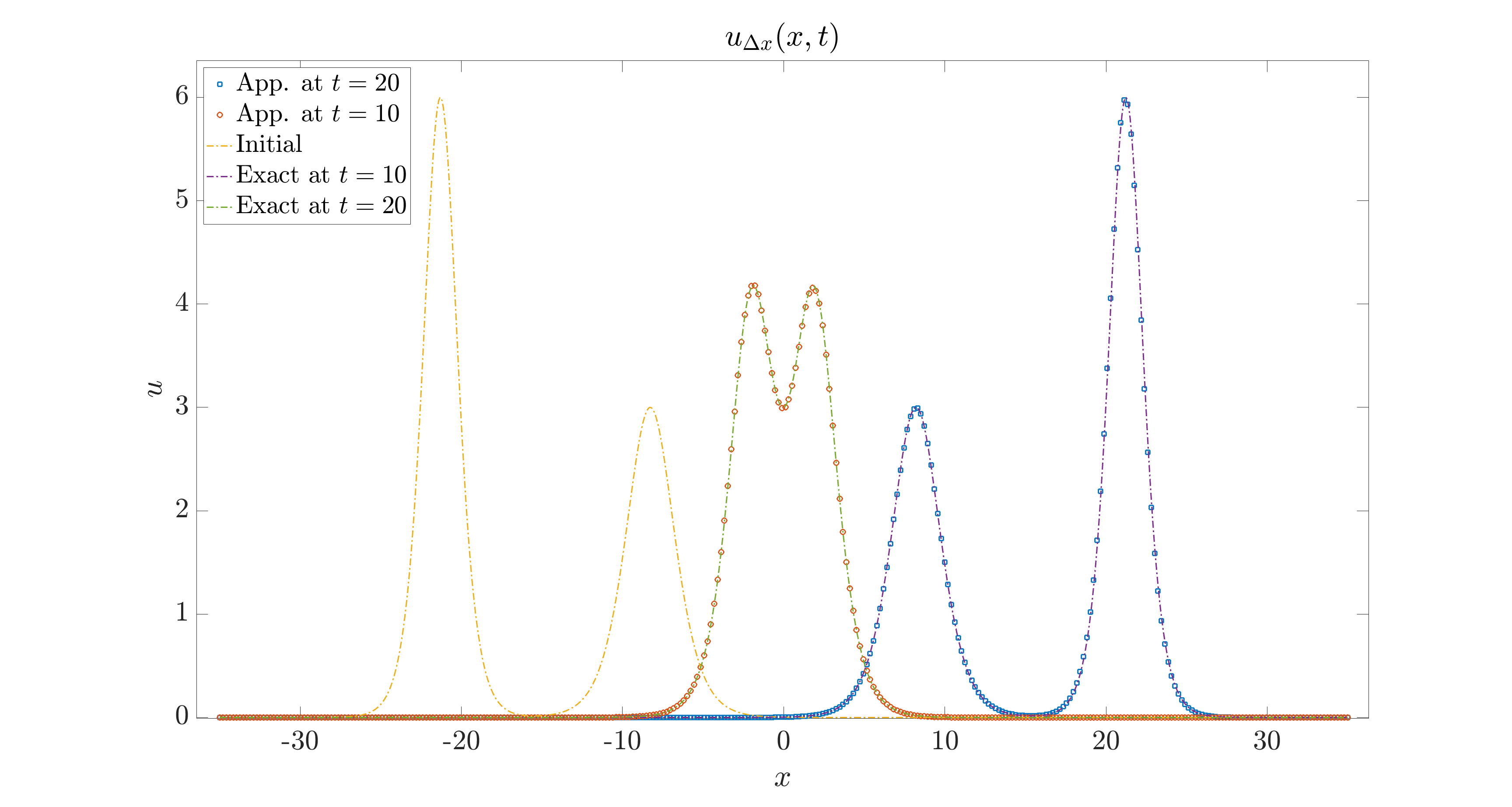}
    \caption{The exact and numerical solution obtained at $t=40$ with the initial data $u(x,-20)$ with $N=1000$.}
    \label{fig:KdVtwoCN}
\end{figure}
   \subsection{Non smooth Initial data} 
Our numerical experiments included a case where the initial data resides in $L^2$ but does not belong to any Sobolev space with a positive index. Importantly, the conclusions drawn in Section \ref{sec4} remain applicable when considering the periodic scenario. To illustrate, we specifically consider the following initial data:
\begin{equation*}
    u_0(x) =
    \begin{cases}
        \frac{1}{2} (x+1), & \text{for } x \in [-1,1], \\
        0, & \text{otherwise}.
    \end{cases}
\end{equation*}
This initial data is defined in the interval $[-5,5]$ and it is periodically extended beyond this interval.
It is crucial to note that in this specific case, an exact solution is not known. In Table \ref{tab:Tablealpha12L2}, we show the $L^2$-errors using the approximate solution with $N=32000$ grid points as a reference solution at time $T = 0.5$.
The notable errors and slow convergence rate suggest that we have not reached the asymptotic zone yet. Increasing grid points was impractical and did not yield better results compared to our current reference solutions as references may not be close to the exact solution.

In Figure \ref{fig:L2CN}, we have plotted the approximate solution using $N=16,000$ grid points against the reference solution. We have observed that the reference solution contains many high frequency waves which also have a very high speed. Our devised method captures the main features but struggles to resolve the finer details of the solution.
\begin{table}
    \centering
    \begin{tabular}{||c|c|c|c|c|c||}
        \hline
 N & E & $C^\Delta_1$ & $C^\Delta_2$ & $C_3^\Delta$ & $R_E$ \\
 \hline
 \hline
 250  & 0.4730  & 0.016 & 0.12& 0.597& \\
  &  &  &  & & -0.032\\
 500 & 0.4836 & 0.032& 0.18 & 0.551&\\
   &  &  &  & &0.186\\
 1000 & 0.4250 & 0.062& 0.25  & 0.574& \\
  &  &  &  && -0.050 \\
  2000 & 0.4399  & 0.127& 0.35 & 0.553 &\\
  &  &  & && 0.119\\
  4000 & 0.4050 & 0.254  & 0.50 & 0.530 &\\
  &  &  &  & & 0.195\\
  8000 & 0.3539 & 0.496 & 0.71  & 0.552 &\\
  &  &  &  & &\\
\hline
    \end{tabular}
   \caption{ Relative errors with $L^2$ initial data.}
   \label{tab:Tablealpha12L2}
\end{table}
\begin{figure}
    \centering
    \includegraphics[width=1 \linewidth, height=9cm]{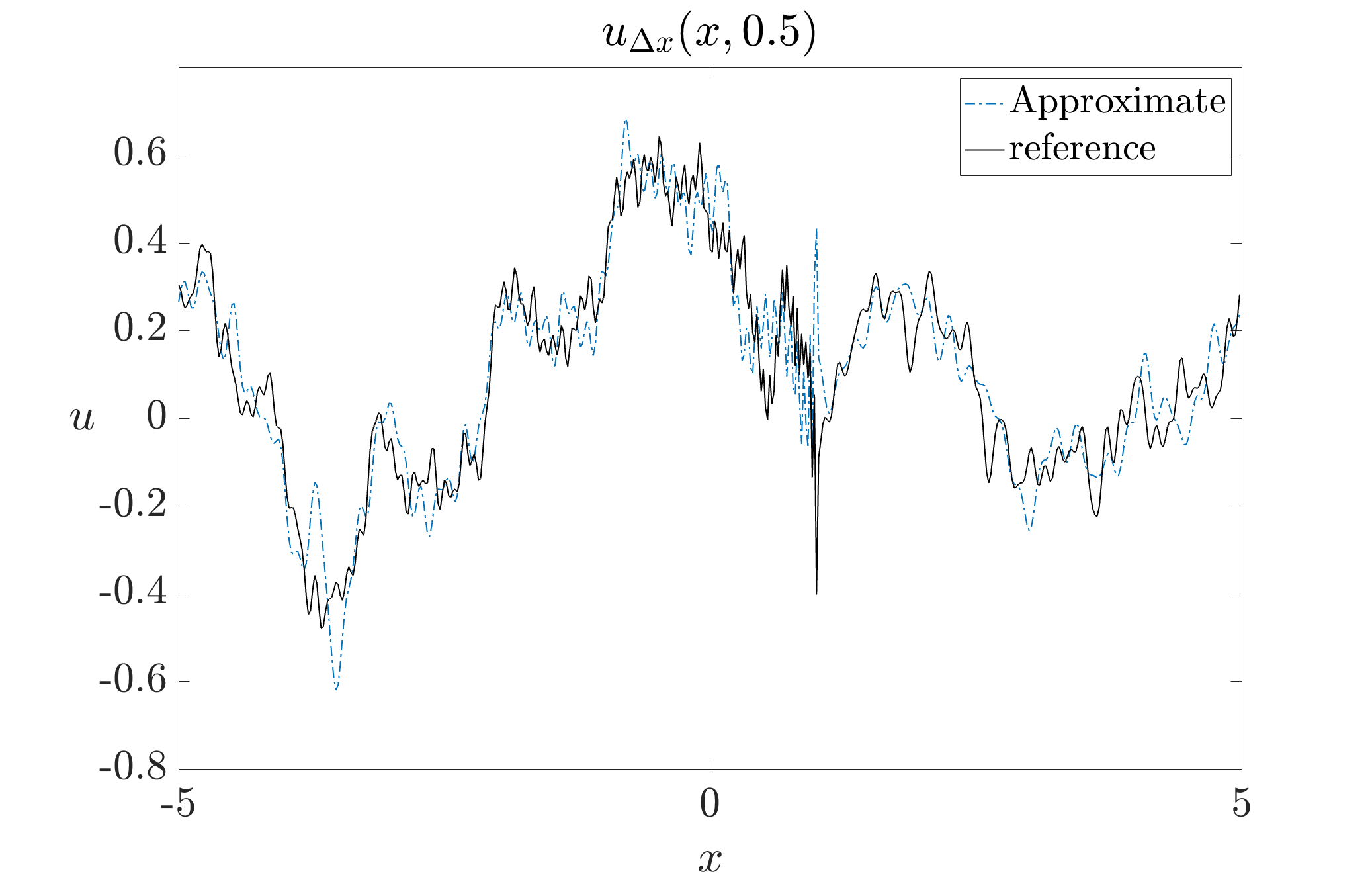}
    \caption{The reference (with $N=32000$) and numerical solution at $t=0.5$ with the $L^2$ initial data with $N=16000$.}
    \label{fig:L2CN}
\end{figure}

In conclusion, our study addresses the computational challenges inherent in solving the KdV equation and develops an efficient numerical scheme. The proposed conservative scheme performs reasonably well in practice and has proven to converge. The inherent smoothing effect plays an important role in the convergence analysis.
The second order  convergence signifies a substantial advancement in accurate and efficient computing solutions of KdV equation.

%This not only underscores the historical significance of the KdV equation but also highlights ongoing efforts to enhance our understanding of its mathematical properties and numerical treatment.

\section*{Acknowledgements}
The authors would like to thank Rajib Dutta and Ujjwal Koley for several fruitful discussions.
%%%%%%%%%%%%%%%%%%%%%%%%%%%%%%%%%%%

\end{document}